\definecolor{cclr}{rgb}{25,25,112}
\newtheorem{cor}{Corollary}
\newtheorem{thm}{Theorem}
\newtheorem*{egothm}{{Property EPL}}
\newtheorem*{egfthm}{{Property EL}}
\newtheorem{setup}{{Setup}}
\newtheorem*{egithm}{{Property SSD}}
\def\presuper#1#2%
\newcommand{\Ad}{\operatorname{Ad}}
\newcommand{\cyc}{\operatorname{cyc}}
\newcommand{\gr}{\operatorname{gr}}
\newcommand{\Ss}{{\operatorname{ss}}}
\newcommand{\wh}[1]{\breve{#1}}
\newcommand{\wt}[1]{\widetilde{#1}}
\newcommand{\Lie}{\operatorname{Lie}}
\newcommand{\Span}{\operatorname{span}}
\newcommand{\GSp}{\operatorname{GSp}}
\newcommand{\GO}{\operatorname{GO}}
\newcommand{\ad}{\operatorname{ad}}
\newcommand{\coker}{\operatorname{coker}}
\newcommand{\Gal}{\operatorname{Gal}}
\newcommand{\red}{{\operatorname{red}}}
\newcommand{\Res}{\operatorname{Res}}
\newcommand{\Int}{{\operatorname{Int}}}
\newcommand{\Herr}{{\operatorname{Herr}}}
\newcommand{\Diag}{{\operatorname{Diag}}}
\newcommand{\Dyn}{{\operatorname{Dyn}}}
\newcommand{\Tor}{{\operatorname{Tor}}}
\newcommand{\GSO}{{\operatorname{GO}}}
\newcommand{\crys}{{\operatorname{crys}}}
\newcommand{\Sp}{\operatorname{Sp}}
\newcommand{\Mat}{\operatorname{Mat}}
\newcommand{\GL}{\operatorname{GL}}
\newcommand{\SO}{\operatorname{SO}}
\newcommand{\Img}{\operatorname{Im}}
\newcommand{\ab}{\operatorname{ab}}
\newcommand{\bG}{\breve{G}}
\newcommand{\Hom}{\operatorname{Hom}}
\newcommand{\spec}{\operatorname{Spec}}
\newcommand{\rank}{\operatorname{rank}}
\titleformat{\section}[runin]{\normalfont\bfseries}{\thesection.}{3pt}{}
\titleformat{\subsection}[runin]{\normalfont\bfseries}{\thesubsection.}{3pt}{}
\titleformat{\subsubsection}[runin]{\normalfont\bfseries}{\thesubsubsection.}{3pt}{}
\titleformat{\paragraph}[runin]{\normalfont\bfseries}{}{3pt}{}
\renewcommand{\thesection}{\arabic{section}}
\titleformat{\section}{\normalfont\large\bfseries}{\thesection.~~}{1em}{}
\newcommand{\fx}{\mathfrak{x}}
\newcommand{\fy}{\mathfrak{y}}
\newcommand{\bA}{\mathbb{A}}
\newcommand{\cX}{\mathcal{X}}
\newcommand{\bFp}{\bar{\mathbb{F}}_p}
\newcommand{\bZp}{\bar{\mathbb{Z}}_p}
\newcommand{\bQp}{\bar{\mathbb{Q}}_p}
\newcommand{\bQ}{\mathbb{Q}}
\newcommand{\bZ}{\mathbb{Z}}
\newcommand{\Qp}{\mathbb{Q}_p}
\newcommand{\lsup}[2]{{^{#1}\!#2}}
\begin{document}

\title{Heisenberg varieties and the existence of de Rham lifts}
\author{Zhongyipan Lin}

\begin{abstract}
We establish the existence of de Rham lifts
of Langlands parameters
(or Galois representations)
for unitary, orthogonal and symplectic
(similitude)
groups of arbitrary rank.
Our results are unconditional 
except for the assumption $p>2$.
\end{abstract}

\maketitle

\tableofcontents

\section{Introduction}~

Let $G$ be a connected reductive group over a $p$-adic
field $F$, $p\neq 2$.
Set $\lsup LG:=\wh G\rtimes \Gal(K/F)$
where $\wh G$ is the complex dual of $G$ (defined over $\bZ_p$)
and $K$ is the splitting field of $G$.
In this paper, we prove that
\begin{thm}
\label{thm:main}
If $\lsup LG$ is one of
\begin{itemize}
\item the $L$-dual of unitary groups $\lsup LU_n$,
\item (the neutral component of) the
orthogonal similitude groups $\GSO_n$, or
\item the symplectic similitude groups $\GSp_{n}$,
\end{itemize}
then all Langlands parameters $\bar\rho:\Gal_F\to \lsup LG(\bFp)$
admit a
potentially diagonalizable,
potentially crystalline lift $\rho:\Gal_F\to \lsup LG(\bZp)$
of regular Hodge type.
\end{thm}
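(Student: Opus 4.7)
The plan is to reduce the existence of a potentially diagonalizable, potentially crystalline lift of regular Hodge type to two ingredients: an inflation–and–descent step that brings the residual parameter into a tame, Borel-valued form, and a deformation-theoretic argument — parametrised by the Heisenberg varieties of the title — that propagates a lift of the semi-simplification to a lift of $\bar\rho$ itself.

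First, I would replace $F$ by a sufficiently large finite Galois extension $F'/F$ so that $\bar\rho|_{\Gal_{F'}}$ factors through a Borel $\wh B\subset \wh G$; since unipotent radicals of $\wh B$ are pro-$p$ and $p>2$, this is always possible after enlarging $F'$. Because a potentially crystalline lift over $F$ is, by Galois descent, the same data as a crystalline lift over $F'$ equivariant for the $\Gal(F'/F)$-action on the parameter, it now suffices to construct such an equivariant crystalline lift. This reduces the problem to the case where $\bar\rho$ itself is Borel-valued and $\bar\rho^{\Ss}$ is a direct sum of tamely ramified characters of a maximal torus $\wh T$ of $\wh G$.

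Second, I would produce a crystalline lift $\rho^{\Ss}$ of $\bar\rho^{\Ss}$ by explicit Fontaine–Laffaille lifting through $\wh T$, choosing Hodge–Tate cocharacters strictly inside the dominant Weyl chamber to guarantee regularity. For each of $\lsup LU_n$, $\GSO_n$ and $\GSp_n$, the relevant maximal torus and its Galois action are explicit enough that the cocharacters can be chosen $\Gal(F'/F)$-compatibly; properties EPL and EL stated at the front of the paper essentially encapsulate this existence.

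The crux — and the step I expect to be the main obstacle — is to upgrade $\rho^{\Ss}$ to a lift of the \emph{full} $\bar\rho$. I would filter the unipotent radical of $\wh B$ by its descending central series as an iterated extension of Heisenberg groups (two-step nilpotent groups with one-dimensional centre), and introduce the Heisenberg variety parametrising crystalline Yoneda extensions between two given characters with prescribed ``centre''. The key technical statement is that the natural map from crystalline Heisenberg extensions to $\mathrm{mod}$-$p$ Heisenberg extensions is surjective on $\bFp$-points; I would expect this to follow from a smoothness argument for the Heisenberg variety, combined — whenever an obstruction appears — with a sufficiently positive Tate twist of the intermediate characters (the regularity freedom in step two allows this without breaking the dominant-weight condition). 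Iterating up the central series yields the desired $\rho$, whose shape as an iterated extension of crystalline characters makes potential diagonalizability manifest after a further unramified base change that trivialises the unipotent part.
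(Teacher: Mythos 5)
Your first step contains the fatal gap. Restricting to a finite extension $F'/F$ so that $\bar\rho|_{\Gal_{F'}}$ becomes Borel-valued, constructing a crystalline lift there, and then ``descending'' does not work: a lift of $\bar\rho$ must be a homomorphism on all of $\Gal_F$, and a $\Gal(F'/F)$-equivariant crystalline lift of the restriction does not automatically extend to $\Gal_F$ --- extending it is an obstruction problem over $\Gal(F'/F)$ that is essentially as hard as the original lifting problem. (Pushed to its extreme, your reduction would allow $F'$ to trivialize the image of $\bar\rho$ entirely, making the lift over $F'$ tautological; the whole difficulty is then hidden in the unaddressed descent.) The paper never makes such a reduction: it works over $F$ throughout, and its dichotomy is not ``Borel-valued after restriction'' but ``elliptic versus factoring through a maximal proper parabolic $\lsup LP$ over $F$'' (Theorem \ref{thm:classify}). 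The elliptic case is quoted from \cite{L23A}, and the parabolic case proceeds by induction on rank, using that the Levi is $\Res_{K/F}\GL_k\times G'$ with $G'$ a classical group of the same type and smaller rank.

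Two further points where your sketch misses what is actually needed. First, choosing the lift $\rho_M$ of the Levi factor is not a matter of picking regular Hodge--Tate cocharacters on a torus: one must choose $\rho_M$ so that the given extension class in $H^1(\Gal_F, U^{\ab}(\bFp))$ lies in the image of $H^1(\Gal_F, U^{\ab}(\bZp))$, and this is exactly the Emerton--Gee geometric input (Property SSD implies Property EPL, via dimension bounds on substacks of the reduced Emerton--Gee stack, Theorem \ref{thm:EG}); nothing in your proposal supplies this. Second, the lifting of $\bFp$-points of Heisenberg varieties is not a smoothness argument, and these varieties are not smooth: the relevant unipotent radical $U$ is $2$-step nilpotent with $\dim_{\bFp}H^2(\Gal_K,[U,U](\bFp))\le 1$, and Theorem \ref{thm:Heisenberg-1} lifts a mod $\varpi$ solution of the quadratic system $(\Sigma,d)$ only after a ramified extension of the DVR, by a Newton-polygon argument that requires the non-triviality of cup products established in Section \ref{sec:cup}. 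Your plan of filtering the unipotent radical of a Borel by its descending central series and twisting away obstructions step by step is not the paper's route and, absent those cup-product non-degeneracy statements, there is no reason the obstruction at each step can be removed.
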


Combining the local-to-global lifting theorem
\cite[Theorem A]{FKP21}
and the local lifting theorem when $l\neq p$
\cite[Theorem 1.1]{BCT24},
we have the following geometric lifting theorem:

\begin{cor}
Let $\bG$ be either $\GSp_{n}$
or $\GSO_n$ over $\bZ$,
and assume $p\gg_G 0$.
Let $\mathbf{F}$ be a totally real number field
and let $\bar\rho:\Gal_{\mathbf F}\to \bG(\bFp)$
be an odd continuous representation
such that $\bar\rho|_{\Gal_{\mathbf{F}(\zeta_p)}}$
is $\bG$-absolutely irreducible.
Then $\bar\rho$ admits
a lift $\bar\rho:\Gal_{\mathbf F}\to \bG(\bZp)$
that is geometric in the sense
of the Fontaine-Mazur conjecture.
\end{cor}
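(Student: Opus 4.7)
The plan is to assemble local lifts via Theorem~\ref{thm:main} (at places above $p$) and \cite[Theorem 1.1]{BCT24} (at places not above $p$), and then globalize using \cite[Theorem A]{FKP21}. The statement is essentially a verification that the hypotheses of these three inputs align, since the deep new content is Theorem~\ref{thm:main}.

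First I would produce local lifts. For each place $v \mid p$ of $\mathbf{F}$, the group $\bG_{\mathbf{F}_v}$ is split, so $\lsup L(\bG_{\mathbf{F}_v}) = \wh{\bG}$ and the restriction $\bar\rho|_{\Gal_{\mathbf{F}_v}}$ is a Langlands parameter in the sense of Theorem~\ref{thm:main}. Applying that theorem yields a potentially diagonalizable, potentially crystalline lift $\rho_v$ of regular Hodge type. I would choose the Hodge--Tate cocharacters at each $v \mid p$ to be compatible with the oddness of $\bar\rho$ (i.e.\ matching the prescribed infinitesimal characters coming from complex conjugations on the totally real field $\mathbf{F}$), which is possible by the freedom one has in selecting Hodge--Tate weights in Theorem~\ref{thm:main}. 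For each finite place $v \nmid p$ at which $\bar\rho$ is ramified, I would apply \cite[Theorem 1.1]{BCT24} to produce a lift $\rho_v: \Gal_{\mathbf{F}_v} \to \bG(\bZp)$; at unramified $v \nmid p$ I take the tautological unramified lift.

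Next I would verify the hypotheses of \cite[Theorem A]{FKP21} for $\bar\rho$ and the collection $\{\rho_v\}_v$. Oddness is given. The assumption that $\bar\rho|_{\Gal_{\mathbf{F}(\zeta_p)}}$ is $\bG$-absolutely irreducible supplies the big-image condition that forces the relevant adjoint Selmer groups to vanish and provides the enlargeability input. The local lifts at $v \mid p$ are potentially diagonalizable by construction, which is exactly the key local hypothesis at $p$ in \cite{FKP21}. The condition $p \gg_G 0$ ensures that the group-theoretic nondegeneracy assumptions (existence of regular semisimple elements, coprimality with orders of component groups and fundamental groups, smoothness of local deformation rings in \cite{BCT24}, etc.) all hold for $\GSp_n$ and $\GSO_n$. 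Applying \cite[Theorem A]{FKP21} then yields a global lift $\rho: \Gal_{\mathbf{F}} \to \bG(\bZp)$ that matches each $\rho_v$ up to $\bG(\bZp)$-conjugation.

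Finally, I would observe that the resulting $\rho$ is unramified outside a finite set of places and potentially crystalline — hence de Rham — at every $v \mid p$, so it is geometric in the sense of the Fontaine--Mazur conjecture. The main obstacle is entirely Theorem~\ref{thm:main}: once that $p$-adic local result is available, the rest is a careful but essentially formal matching of hypotheses between \cite{FKP21} and \cite{BCT24}, together with a bookkeeping check that the Hodge--Tate cocharacters chosen at places above $p$ can be made compatible with the oddness of $\bar\rho$ at the archimedean places.
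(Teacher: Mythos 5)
Your proposal matches the paper's argument exactly: the corollary is obtained by combining the potentially diagonalizable, potentially crystalline lifts of regular Hodge type from Theorem~\ref{thm:main} at places above $p$ with the $\ell\neq p$ local lifting theorem of \cite[Theorem 1.1]{BCT24} and then invoking the local-to-global theorem \cite[Theorem A]{FKP21}. The paper states this combination without further elaboration, so your additional bookkeeping of the hypotheses is consistent with (and somewhat more detailed than) the intended proof.
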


\subsection{Outline of the strategy}~

Let $P\subset \wh G$ be a $\Gal(K/F)$-stable parabolic.
Write $\lsup LP$ for $P\rtimes \Gal(K/F)$.
A mod $p$ Langlands parameter is either elliptic,
or factors through some $\lsup LP$.
In our previous paper \cite{L23A},
we classified elliptic mod $p$ Langlands parameters for $G$
and constructed their de Rham lifts.
In this paper, we shift our attention to parabolic mod $p$ Langlands parameters.

Let $\bar\rho:\Gal_F\to \lsup LP(\bFp)$ be a parabolic mod $p$
Langlands parameter, and 
we seek to address the following question:
\begin{quotation}
Does there exist
a de Rham lift $\rho:\Gal_F\to \lsup LP(\bar\bZ_p)$
of regular Hodge type?
\end{quotation}
The $G=\GL_n$-case is resolved in the book
by Emerton and Gee.
We briefly describe the strategy of \cite{EG23}
and explain the challenge for groups which are not $\GL_n$.
Let $\lsup LM = \wh M\rtimes \Gal(K/F)$ denote a $\Gal(K/F)$-stable Levi subgroup of
$\lsup LP$ and write $U$ for the unipotent radical of $P$.
Write $\bar\rho_M:\Gal_F\xrightarrow{\bar\rho} \lsup LP(\bFp)\to \lsup LM(\bFp)$
for the $\lsup LM$-semisimplification of $\bar\rho$.
The construction $\rho$ follows a $2$-step process:
\begin{itemize}
\item Step 1: Carefully choose a lift $\rho_M:\Gal_F\to \lsup LM(\bar\bZ_p)$ of $\bar\rho_M$.
\item Step 2: 
Endow $U(\bar\bZ_p)$ with the $\Gal_F$-action induced by $\rho_M$.
Show that the image of
$$
H^1(\Gal_F, U(\bar\bZ_p)) \to H^1(\Gal_F, U(\bFp))
$$
contains the cocycle corresponding to $\bar\rho$.
\end{itemize}

\subsection{Partial lifts after abelianization and the work of Emerton-Gee}~

Let $[U, U]$ denote the derived subgroup of $U$
and write $U^{\ab}:=U/[U, U]$
for the abelianization of $U$.

The first approximation of a de Rham lift of $\bar\rho$
is a continuous group homomorphism
$$\Gal_F\to \frac{\lsup LP}{[U,U]}(\bar\bZ_p)$$
lifting $\bar\rho$ modulo $[U,U]$.
The groundbreaking idea presented in \cite{EG23} is that we can achieve the following:
\begin{itemize}
\item Step 1: Choose a lift $\rho_M:\Gal_F\to \lsup LM(\bar\bZ_p)$ of $\bar\rho_M$.
\item Step 2: Guarantee the image of
$$
H^1(\Gal_F, U^{\ab}(\bar\bZ_p)) \to H^1(\Gal_F, U^{\ab}(\bFp))
$$
contains the cocycle corresponding to $\bar\rho \mod [U,U]$,
\end{itemize}
as long as we can estimate the dimension of certain substacks
of the reduced Emerton-Gee stacks for $M$.

We formalize the output of their geometric argument as follows.

\begin{setup}
\label{setup}
Let $\bar\rho_M:\Gal_F\to \lsup LM(\bFp)$
be an $L$-parameter.
Let $\spec R$ be a non-empty potentially crystalline
framed
deformation ring of $\bar\rho_M:\Gal_F
\xrightarrow{\bar\rho_M} \lsup LM(\bFp)$
such that for some $x \in \spec R(\bar\bQ_p)$,
$H^1_{\crys}(\Gal_K, U^{\ab}(\bar\bQ_p))
=H^1(\Gal_K, U^{\ad}(\bar\bQ_p))$.
\end{setup}

\begin{egothm}
\label{axiom:eg}
(Existence of partial lifts)
For any cocycle $[\bar c]\in H^1(\Gal_F, U^{\ab}(\bFp))$,
there exists a point $y\in \spec R(\bar\bZ_p)$ 
equipping $U^{\ab}(\bar\bZ_p)$ with a $\Gal_F$-action,
such that the image of
$$H^1(\Gal_F, U^{\ab}(\bar\bZ_p)) \to H^1(\Gal_F, U^{\ab}(\bFp))$$
contains $[\bar c]$.
\end{egothm}

\begin{egfthm}
\label{axiom:eg}
(Existence of full lifts)
For any cocycle $[\bar c]\in H^1(\Gal_F, U(\bFp))$,
there exists a point $y\in \spec R(\bar\bZ_p)$ 
equipping $U(\bar\bZ_p)$ with a $\Gal_F$-action,
such that the image of
$$H^1(\Gal_F, U(\bar\bZ_p)) \to H^1(\Gal_F, U(\bFp))$$
contains $[\bar c]$.
\end{egfthm}

\begin{egithm}
(Sufficiently small dimension)
If $\spec R_s\subset \spec (R \otimes \bFp)_{\red}$
is the closure of locus defined by
$$\dim_{\bFp}H^2(\Gal_F, U^{\ab}(\bFp))\ge s,$$
then $$\dim R_s + s \le \spec (R \otimes \bFp)_{\red}.$$
\end{egithm}

Here we define the dimension of an empty set to be $-\infty$
to avoid confusion.

\begin{thm}
\label{thm:EG}
(1) Property SSD implies Property EPL.

(2) For $\lsup LU_n$, $\GSO_n$
and $\GSp_n$, Property SSD holds when $p>2$.
\end{thm}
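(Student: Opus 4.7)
For (1), the plan is to formalize the geometric lifting strategy of \cite{EG23}. I would introduce an auxiliary scheme $\cY\to \spec R$ whose $T$-valued points parametrize continuous $1$-cocycles $c:\Gal_F\to U^{\ab}(\cO_T)$, where $U^{\ab}$ carries the Galois action pulled back from the universal $\rho_M$. By Tate's local Euler characteristic formula, the fiber dimension over $y$ is $[F:\Qp]\dim U^{\ab}+h^0(\Gal_F,U^{\ab}_y)+h^2(\Gal_F,U^{\ab}_y)$, so the $h^2$-jump locus coincides with the stratification $\{\spec R_s\}$. On the generic stratum (where $h^2=0$ by the hypothesis in Setup) the fiber has the expected dimension; on each $\spec R_s$ the fiber gains at most $s$ extra dimensions, while Property SSD forces the base to lose at least $s$. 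Summing contributions, the total dimension of $\cY\otimes \bFp$ equals that of $\spec R\otimes \bFp$ plus $\dim H^1(\Gal_F,U^{\ab}(\bFp))$, so the reduction-mod-$p$ map
$$\cY\otimes\bFp\to H^1(\Gal_F, U^{\ab}(\bFp))$$
has image of full dimension and is therefore surjective. Any preimage in $\cY(\bar\bZ_p)$ of $[\bar c]$ yields a point $y\in \spec R(\bar\bZ_p)$ of the required kind.

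For (2), I would proceed case by case on $\lsup LG\in\{\lsup LU_n,\GSO_n,\GSp_n\}$ and on the Galois-stable parabolic $P\subset \wh G$. Any such $P$ stabilizes an isotropic partial flag in the standard representation of $\wh G$, so the Levi $\lsup LM$ is a Galois-twisted product $\prod_i \GL_{n_i}\times \lsup LG_0$ with $G_0$ a classical group of the same type and strictly smaller rank (possibly trivial). The abelianization $U^{\ab}$ decomposes as an $M$-module into explicit weight pieces: tensor products of standard and costandard modules between the $\GL$-blocks, a $V_i\otimes V_{G_0}^{\std}$ block whenever $G_0\neq 1$, and a $\Sym^2 V_i$ or $\wedge^2 V_i$ summand for each self-dual block. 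By Tate local duality,
$$H^2(\Gal_F, U^{\ab}(\bFp))\cong H^0(\Gal_F, U^{\ab}(\bFp)^*(1))^*,$$
so the stratification $\{\spec R_s\}$ is cut out by the vanishing of explicit characters built from eigencharacters of $\bar\rho_M$ twisted by the cyclotomic character. Combining the dimension formulas for potentially crystalline deformation rings from \cite{EG23} and \cite{L23A}, Property SSD reduces to the statement that each additional character coincidence imposes at least one genuine transverse condition on the reduced Emerton--Gee stack of $M$.

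The main obstacle I anticipate is the case in which $M$ contains a nontrivial classical factor $G_0$: the $\Sym^2$ or $\wedge^2$ summand of $U^{\ab}$ produces \emph{quadratic} relations among eigencharacters rather than linear ones, so the naive codimension count must be refined to account for the induced self-duality on Frobenius eigenvalues. The assumption $p>2$ is precisely what ensures these symmetric and alternating summands split off integrally and that the passage $U\to U^{\ab}$ is compatible with Galois cohomology in residue characteristic $p$.
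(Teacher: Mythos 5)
Your part (1) follows the same route the paper itself takes (the paper simply cites \cite[Theorem 5.1.2]{L25A}, whose proof is that of \cite[Theorem 6.1.1, Theorem 6.3.2]{EG23}), but the final step of your sketch is not valid as written: a constructible image of full dimension in the affine space $H^1(\Gal_F, U^{\ab}(\bFp))$ need not be all of it. The Emerton--Gee argument closes this by showing the relevant image is the underlying reduced substack of a $p$-torsion-free formal algebraic stack (hence closed, so full dimension in an irreducible target does give everything), together with the fact that every $\bFp$-point of that reduced substack genuinely lifts because the versal crystalline deformation rings are $\cO$-flat. Without some such closedness/flatness input, your dimension count only yields a dense image.

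The genuine gap is in part (2). You reduce Property SSD to the assertion that ``each additional character coincidence imposes at least one genuine transverse condition on the reduced Emerton--Gee stack of $M$,'' and you explicitly leave unresolved the self-dual ($\Sym^2$ and $\wedge^2$) summands where the conditions become quadratic. But that assertion \emph{is} Property SSD; nothing in the sketch establishes it. Moreover the jump locus of $h^2=\dim H^0(\Gal_F,(U^{\ab})^\vee(1))$ is governed by homomorphisms between irreducible constituents of $\bar\rho_M$, not by character coincidences, so the proposed stratification is not even well defined once $\bar\rho_M$ has non-abelian image. The paper avoids the case-by-case analysis entirely: it invokes the uniform codimension bound $\dim[\spec R_s/\wh M]\le \dim\cX_{\lsup LM,\red}-s^2/2$ of \cite{L25B}, combines it with the dimension formula $\dim\spec R = 1+\dim\wh M+[F:\Qp]\dim \wh M/B_{\wh M}$ of \cite{BG19} and the $p$-flatness of $R$, and is then left only with the trivial inequality $s\le\lceil s^2/2\rceil$. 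To make your approach work you would have to either prove such a quadratic codimension bound or actually carry out the transversality analysis you gesture at, including the self-dual blocks you flag as problematic.
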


\subsection{Heisenberg-type extensions}~

For $G=\GL_n$,
we can choose $P$ such that $U=U^{\ab}$.
The good news is that for classical groups,
we can always choose $P$ such that
$U$ is the next best thing after abelian groups,
namely, unipotent algebraic groups of nilpotency class $2$.

\begin{thm}
\label{thm:classify}
(Lemma \ref{lem:unitary-1}, Lemma \ref{lem:symplectic-1}, 
Lemma \ref{lem:orth-1},
Theorem \ref{thm:main-unitary}, Theorem \ref{thm:main-symplectic}, Theorem \ref{thm:main-orthogonal})
Let $\lsup LG$ be any of $\lsup LU_n, \GSp_{2n}$ or $\GSO_{n}$.

(1) Each mod $p$ Langlands parameter
$\bar\rho:\Gal_F \to \lsup LG(\bFp)$
is either elliptic, 
or factors through a maximal proper parabolic
$\lsup LP$ 
such that $\bar\rho$
is a Heisenberg-type extension (see Definition \ref{def:H-type})
of some $\bar\rho_M:\Gal_F\to \lsup LM(\bFp)$
where $\lsup LM$ is the Levi factor of $\lsup LP$.

(2)
If $\bar\rho$ is a Heisenberg-type extension
of $\rho_M:\Gal_F\to \lsup LM(\bFp)$,
then Property EPL implies Property EL.
\end{thm}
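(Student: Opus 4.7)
The theorem has two logically distinct parts, and I plan to treat them separately: part~(1) is a case-by-case structural classification, while part~(2) is the substantive cohomological statement.

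For part~(1), the plan is to run through the $\Gal(K/F)$-stable maximal parabolic subgroups in each of $\wh U_n$, $\GSp_{2n}$, and $\GSO_n$ and to compute the unipotent radical $U$ explicitly. In every case, $U$ is either abelian or fits into a central extension
\[
1 \to [U,U] \to U \to U^{\ab} \to 1
\]
with abelian kernel and cokernel, and the commutator pairing $U^{\ab} \otimes U^{\ab} \to [U,U]$ matches the shape required by Definition~\ref{def:H-type}; this is the content of Lemmas~\ref{lem:unitary-1}, \ref{lem:symplectic-1}, \ref{lem:orth-1}. A non-elliptic $\bar\rho$ factors through some $\Gal(K/F)$-stable proper parabolic $\lsup LP$; after enlarging to a maximal one, the resulting extension of $\bar\rho_M$ is of Heisenberg type, as recorded in Theorems~\ref{thm:main-unitary}, \ref{thm:main-symplectic}, \ref{thm:main-orthogonal}.

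For part~(2), write $V := U^{\ab}$ and $Z := [U,U]$. Since $Z$ is central in $U$, the commutator is an alternating bilinear pairing $\omega: V \otimes V \to Z$, and the long exact sequence in non-abelian cohomology associated to $1 \to Z \to U \to V \to 1$ furnishes a connecting map $\delta: H^1(\Gal_F, V) \to H^2(\Gal_F, Z)$. A direct cocycle computation with a set-theoretic section $V \to U$ shows that, since $p>2$,
\[
\delta(c) \;=\; \tfrac{1}{2}\,\omega_*(c \cup c),
\]
and a class in $H^1(V)$ lifts to $H^1(U)$ iff $\delta$ kills it. Given $[\bar c] \in H^1(\Gal_F, U(\bFp))$, I first apply Property EPL to the abelianization $[\bar c^{\ab}]$ to obtain a point $y \in \spec R(\bar\bZ_p)$ and a lift $[\tilde c^{\ab}] \in H^1(\Gal_F, V(\bar\bZ_p))$; then I verify $\delta(\tilde c^{\ab}) = 0$ for a suitable choice of lift; and finally, once some $[\tilde c] \in H^1(\Gal_F, U(\bar\bZ_p))$ lifting $[\tilde c^{\ab}]$ has been produced, I shift it by a class in $H^1(\Gal_F, Z(\bar\bZ_p))$ so that the mod $p$ reduction matches $[\bar c]$ exactly. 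The final shift is legitimate because $Z$ is a free $\bar\bZ_p$-module, so the reduction map $H^1(\Gal_F, Z(\bar\bZ_p)) \twoheadrightarrow H^1(\Gal_F, Z(\bFp))$ is surjective.

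The main obstacle is the vanishing step. Since $[\bar c]$ itself exists, $\delta(\tilde c^{\ab})$ automatically reduces to zero mod $p$, and the freedom in the choice of $\tilde c^{\ab}$ is by classes of the form $p\eta$ with $\eta \in H^1(\Gal_F, V(\bar\bZ_p))$. Bilinearity of the cup product gives
\[
\delta(\tilde c^{\ab} + p\eta) - \delta(\tilde c^{\ab}) \;=\; p\,\omega_*(\tilde c^{\ab} \cup \eta) + \tfrac{p^2}{2}\,\omega_*(\eta \cup \eta),
\]
so the plan is to choose $\eta$ so that the linear correction $p\,\omega_*(\tilde c^{\ab} \cup \eta)$ cancels the mod-$p$-trivial obstruction $\delta(\tilde c^{\ab})$, and then to iterate to push residual error into higher powers of $p$. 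The crux is to establish that the cup-product map $\eta \mapsto \omega_*(\tilde c^{\ab} \cup \eta)$ surjects onto the relevant piece of $H^2(\Gal_F, Z(\bar\bZ_p))$. I expect this surjectivity to be the technical heart of the argument; it should follow from combining Tate local duality (relating $H^2(Z)$ to a dual of $H^0$), the Setup~\ref{setup} hypothesis equating crystalline and full $H^1$ of $V$ at $x$ (to supply enough classes $\eta$), and the non-degeneracy of the Heisenberg pairing $\omega$ built into Definition~\ref{def:H-type}.
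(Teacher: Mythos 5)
There are genuine gaps in both parts. For part (1), you have misread Definition \ref{def:H-type}: being a Heisenberg-type extension is not a structural property of $U$ (the nilpotency-class-$2$ structure is Assumption \ref{ass:ext} and holds for every maximal parabolic of these groups); it is the cohomological condition $\dim_{\bFp}H^2(\Gal_K,\gr^0\Lie U(\bFp))\le 1$, which depends on $\bar\rho_M$. Consequently ``enlarging to a maximal parabolic'' does nothing. The actual content of Lemmas \ref{lem:unitary-1}, \ref{lem:symplectic-1}, \ref{lem:orth-1} is that one may choose the index $k$ of the parabolic $\lsup LP_k$ (possibly \emph{shrinking} the $\GL_k$-block, as in Proposition \ref{prop:unitary}) so that the two corner blocks $\bar r_1,\bar r_2$ of $\bar\rho_M$ are irreducible; then local Tate duality plus Schur gives $\dim H^2(\Gal_K,\Hom(\bar r_2,\bar r_1))=\dim H^0(\Gal_K,\Hom(\bar r_1,\bar r_2(1)))\le 1$. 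This step is absent from your outline.

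For part (2), your successive-approximation scheme fails at its central step. You adjust the partial lift by $p\eta$ and try to cancel the obstruction with the \emph{linear} term $p\,\omega_*(\tilde c^{\ab}\cup\eta)$; this requires $\eta\mapsto\omega_*(\tilde c^{\ab}\cup\eta)$ to hit the obstruction, but hypothesis (iii) of Theorem \ref{thm:Galois} only guarantees the pairing is non-trivial \emph{somewhere} — the specific class $\tilde c^{\ab}$ may lie in the radical mod $p$, and then $p\,\omega_*(\tilde c^{\ab}\cup\eta)\in p^2H^2$ cannot cancel an obstruction of exact order $p$. The paper's mechanism is different and essential: one perturbs by $\lambda f$ with $f\cup f\ne 0$ mod $\varpi$ and uses the \emph{quadratic} term, finding $\lambda$ of positive, generally fractional, valuation via the Newton polygon (Theorem \ref{thm:Heisenberg-1}); this is exactly why one must pass to an extension $\Lambda\subset\Lambda'$ of DVRs, a feature your scheme has no room for. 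Two further problems: your final shift invokes surjectivity of $H^1(\Gal_F,Z(\bar\bZ_p))\to H^1(\Gal_F,Z(\bFp))$, which is false in general — the cokernel is $\Tor_1^{\bar\bZ_p}(H^2(\Gal_F,Z(\bar\bZ_p)),\bFp)$, nonzero precisely in the delicate case $H^2\cong\Lambda/\varpi^n$ that condition (H1) and the last paragraph of the proof of Theorem \ref{thm:Heisenberg-1} are designed to handle; and the ``non-degeneracy of $\omega$ built into Definition \ref{def:H-type}'' does not exist — the non-triviality of the cup product is a separate, substantive verification carried out group-by-group (Lemma \ref{lem:unitary-cup-5}, Proposition \ref{prop:unitary-cup-1}, Corollaries \ref{cor:symp-1} and \ref{cor:orth-1}) via Euler-characteristic counting and the classical $\Delta$-structure. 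Finally, note that the condition defining an extension is an identity of \emph{cochains} in the Herr complex ($d(x_0,y_0)+(x_1,y_1)\cup(x_1,y_1)=0$), so the problem must ultimately be solved at the truncated cochain level as in Theorem \ref{thm:coh}, not purely in cohomology.
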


A {\it Heisenberg-type extension}
is, roughly speaking,
an extension which has the least amount of ``non-linearity''.
More precisely, if $\bar\rho_P:\Gal_F\to \lsup LP(\bFp)$
is a Heisenberg-type extension of $\bar\rho_M$, then
$[U,U]$ is an abelian group, and
$$
\dim_{\bFp}H^2(\Gal_K, [U, U](\bFp))\le 1.
$$

The key technical result of this paper
is Theorem \ref{thm:Galois}.
Roughly speaking,
for Heisenberg-type extensions,
the non-linear part of the obstruction for lifting
is so mild that it can be killed through manipulating
cup products.
To make this idea work, we need
a resolution of Galois cohomology
supported on degrees $[0,2]$,
which is compatible with cup products
on the cochain level.
In this paper, the resolution used
is the Herr complexes.
Although Herr complexes are infinite-dimensional
resolutions,
we can truncate them to a finite system
while still retaining the structure of cup products.
The Heisenberg equations are defined through
cup products on the truncated Herr cochain groups.

This paper is organized as follows:
in Section 1-6,
we work out the group-insensitive machinery
of crafting lifts;
in Section 7-10,
we specialize to classical groups.

\newpage
\phantomsection
\addcontentsline{toc}{part}{Part I: General theory}

\noindent
{\bf \large Part I: General theory}

\section{Heisenberg equations}~

Let $r, s, t\in \bZ_+$ be integers.
Let $\Lambda$ be a DVR with uniformizer $\varpi$.
Let $d\in \Mat_{s\times t}(\Lambda)$
and $\Sigma_1,\dots,\Sigma_s\in \Mat_{r\times r}(\Lambda)$
be constant matrices.
For ease of notation, for $x\in \Mat_{r\times 1}(\Lambda)$, write
$$
x^t \Sigma x :=
\begin{bmatrix}
x^t \Sigma_1 x \\
\dots\\
x^t \Sigma_s x
\end{bmatrix}
\in \Mat_{s \times 1}(\Lambda).
$$
Here $x^t$ denotes the transpose of $x$.
We are interested in solving systems of equations in $(r+t)$ variables of the form
\[
x^t \Sigma x + d y = 0
\tag{$\dagger$}
\]
where $x\in \Lambda^{\oplus r}$ and
$y\in \Lambda^{\oplus t}$
are the $(r+t)$ variables.
We will call ($\dagger$)
the quadratic equation with coefficient matrix $(\Sigma,d)$.

\subsection{Lemma}
\label{lem:Heisenberg-1}
Let $\Lambda$ be a DVR with uniformizer $\varpi$.
Let $M$ be a finite flat $\Lambda$-module.
If $N\subset M$ is a submodule such that $M/N\cong \Lambda/\varpi^n$ ($n>0$),
then there exists a $\Lambda$-basis $\{x_1, x_2, \dots, x_s\}$
of $M$ such that
$N = \Span(\varpi^n x_1, x_2, \dots, x_s)$.

\begin{proof}
Let $\{e_1,\dots, e_s\}$ be a $\Lambda$-basis of $M$
and let $\{f_1,\dots,f_s\}$ be a $\Lambda$-basis of $N$.
There exists a matrix $X\in \GL_{s}(\Lambda[1/\varpi])$
such that $(f_1|\dots|f_s) = X (e_1|\dots|e_s)$.
By the theory of Smith normal form,
$X = S D T$ where $S,T\in \GL_{s}(\Lambda)$
and $D$ is a diagonal matrix.
We have $D = \Diag(\varpi^n,1,\dots,1)$.
Set $(x_1|\dots|x_s):=T(e_1|\dots|e_s)$,
and we are done.
\end{proof}

\subsection{Definition}
\label{def:Heisenberg-1}
A quadratic equation with
coefficient matrix $(\Sigma, d)$
is said to be {\it Heisenberg}
if 
\begin{itemize}
\item[(H1)] $\coker d\cong \Lambda$ or $\Lambda/\varpi^n$; and
\item[(H2)] there exists $f\in \Lambda^{\oplus r}$
such that $f^t \Sigma f \ne 0$ mod $(\varpi, \Img(d))$.
\end{itemize}

\subsection{Theorem}
\label{thm:Heisenberg-1}
Let $(\Sigma, d)$ be a Heisenberg equation over $\Lambda$.
If there exists a mod $\varpi$ solution $(\bar x, \bar y)\in (\Lambda/\varpi)^{\oplus r+t}$
to $(\Sigma, d)$ (that is, $\bar x^t \Sigma \bar x + d \bar y \in \varpi \Lambda^{\oplus s}$),
then there exists an extension of DVR $\Lambda\subset \Lambda'$
such that there exists a solution $(x, y)\in \Lambda^{\prime \oplus r+t}$
of $(\Sigma, d)$ lifting $(\bar x, \bar y)$.

\begin{proof}
Write $\{e_1,\dots,e_s\}$ for the standard basis for $\Lambda^{\oplus s}$.
By Lemma \ref{lem:Heisenberg-1},
we can assume $\Img(d)=\Span(\varpi^n e_1,e_2,\dots,e_s)$
or $\Span(e_2,\dots,e_s)$.
Write
$d=
\begin{bmatrix}
d_1 \\ \dots \\d_s
\end{bmatrix}
$.
By Definition \ref{def:Heisenberg-1},
there exists an element $f\in \Lambda^{\oplus r}$
such that $f^t\Sigma f \ne 0$ mod $(\varpi, \Img(d))$;
equivalently, $f^t\Sigma_1 f \ne 0$ mod $\varpi$.
Let $(x, y)$ be an arbitrary lift of $(\bar x, \bar y)$.
Let $\Lambda'$ be the ring of integers of the algebraic closure of
$\Lambda[1/\varpi]$.
Let $\lambda\in \Lambda'$.
Consider
\begin{eqnarray*}
(x + \lambda f)^t\Sigma_1(x + \lambda f)
+ d_1 y &=
(f^t\Sigma_1 f) \lambda^2 + (x^t\Sigma_1 f + f^t\Sigma_1 x) \lambda
+ (x^t\Sigma_1 x + d_1y);
\end{eqnarray*}
note that the $\varpi$-adic valuation of the quadratic term is $0$
while the $\varpi$-adic valuation of the constant team is positive.
By inspecting the Newton polygon, the quadratic equation above admits a solution
$\lambda \in \Lambda'$ of positive $\varpi$-adic valuation.
By replacing $x$ by $x+ \lambda f$,
we can assume 
$$
x^t \Sigma_1 x + d_1 y =0.
$$
Equivalently,
$x^t \Sigma x + d y \in \Span (e_2,\dots,e_s)\subset \Img(d)$.
By replacing $\Lambda$ by $\Lambda[\lambda]$,
we may assume $(x,y)\in \Lambda^{\oplus r + t}$.
In particular, there exists an element $z\in \Lambda^{\oplus t}$
such that 
$$
x^t \Sigma x + d y = d z,
$$
and it remains to show we can ensure $z=0$ mod $\varpi$.
We do know $d z = 0$ mod $\varpi$.
So $d z \in \Span(\varpi e_2,\dots, \varpi e_s)$.
Say $d z = \varpi u$, we have
$u\in \Span(e_2,\dots, e_s)\subset \Img(d)$.
Say $u=d v$.
So $d z = \varpi d v = d \varpi v$.
By replacing $z$ by $\varpi v$,
we have
$$
x^t \Sigma x + d y = d \varpi v.
$$
Finally, replacing $y$ by $(y-\varpi v)$,
we are done.
\end{proof}

We will call affine varieties defined by Heisenberg equations {\it Heisenberg varieties}.
Theorem \ref{thm:Heisenberg-1} says
all $\Lambda/\varpi$-points of a Heisenberg variety
admit a $\varpi$-adic thickening.

\section{Extensions of $(\varphi, \Gamma)$-modules and non-abelian $(\varphi, \Gamma)$-cohomology}~

Fix a pinned split reductive group $(\wh G, \wh B, \wh T, \{Y_\alpha\})$
over $\bZ$.
Fix a parabolic $P\subset \wh G$ containing $\wh B$
with Levi subgroupn $\wh M$
and unipotent radical $U$.
Let $K$ be a $p$-adic field.
Let $A$ be a $\bZ_p$-algebra.
The ring $\bA_{K, A}$ is defined as in 
\cite[Definition 4.2.8]{L23B}.
See \cite[Section 4.2]{L23B} for the definition of the procyclic group
$H_K$.
Fix a topological generator $\gamma$ of $H_K$.
Note that $\bA_{K, A}$ admits a Frobenius action $\varphi$
which commutes $\gamma$.

\subsection{Framed parabolic $(\varphi, \Gamma)$-modules}~
A {\it framed $(\varphi, \gamma)$-module} with $P$-structure
and $A$-coefficients is
a pair of matrices $[\phi], [\gamma]\in P(\bA_{K, A})$,
satisfying 
$[\phi]\varphi([\gamma])=[\gamma]\gamma([\phi])$.

A {\it framed $(\varphi, \Gamma)$-module} with $P$-structure
and $A$-coefficients
is a framed $(\varphi, \gamma)$-module $([\phi], [\gamma])$
with $P$-structure
and $A$-coefficients
such that there exists a closed algebraic group embedding
$P\hookrightarrow \GL_d = \GL(V)$ and
$1-[\gamma]$ induces a topologically nilpotent
$\gamma$-semilinear endomorphism of $V(\bA_{K, A})$.
To make is concrete, if $\{e_1,\dots,e_d\}$ is
the standard basis of $V$, then
$[\gamma]$ sends $\alpha e_i$ to $\gamma(\alpha)[\gamma]e_i$.

\subsection{Levi factor of $(\varphi, \gamma)$-modules}
Let $([\varphi], [\gamma])$ be a framed $(\varphi, \gamma)$-module
with $P$-structure and $A$-coefficients.
Write $([\varphi]_{\wh M}, [\gamma]_{\wh M})$
for its image under the projection $P\to \wh M$.
Note that $([\varphi]_{\wh M}, [\gamma]_{\wh M})$
is a framed $(\varphi, \gamma)$-module with $\wh M$-structure.

\subsection{Lemma}
\label{lem:ext-1}
Let $([\varphi], [\gamma])$ be a framed $(\varphi, \gamma)$-module
with $P$-structure and $A$-coefficients.
Then $([\varphi], [\gamma])$ is a framed $(\varphi, \Gamma)$-module
with $P$-structure  and $A$-coefficients
if and only if 
$([\varphi]_{\wh M}, [\gamma]_{\wh M})$ is a framed $(\varphi, \Gamma)$-module
with $\wh M$-structure  and $A$-coefficients.

\begin{proof}
Note that $P = U \rtimes \wh M$ and $\wh M$ is a subgroup of $P$.
We will regard both $P$ and $\wh M$ as a subgroup of $\GL_d\subset \Mat_{d\times d}$
by fixing an embedding $P\hookrightarrow \GL_d$.

Write $[u]:=[\gamma]-[\gamma]_{\wh M}$.
Note that the Jordan decomposition of $[\gamma]$ and $[\gamma]_{\wh M}$
has the same semisimple part;
write $[\gamma]=g_s g_u$
and $[\gamma]_{\wh M}=g_s g_u'$
for the Jordan decomposition where both $g_u$ and $g_u'$
lies in the unipotent radical of a Borel of $P$
(if we replace $\wh M$ by one of its conjugate in $P$, then $g_u$ and $g_u'$
lies in the unipotent radical of the same Borel of $P$).
By the Lie-Kolchin theorem, $(g_u-g_u')$ is nilpotent.
Since $g_s$ commutes with $g_u$ and $g_u'$, $[u]$ is nilpotent.
We have $(1-[\gamma]_{\wh M}) = (1-[\gamma] + [u])$.
Since $[u]$ is nilpotent, $(1-[\gamma]_{\wh M})$
is topologically nilpotent if and only if
$(1-[\gamma])$ is topologically nilpotent.
\end{proof}

\subsection{Extensions of $(\varphi, \Gamma)$-modules}~
In this paragraph, we classify
all framed $(\varphi, \Gamma)$-modules
with $P$-structure  and $A$-coefficients
whose Levi factor
is equal to a fixed $(\varphi, \Gamma)$-module with $\wh M$-structure
$([\phi]_{\wh M}, [\gamma]_{\wh M})$.

For ease of notation, write $f =[\phi]_{\wh M}$
and $g = [\gamma]_{\wh M}$.
We denote by
$$
H^1_{\Herr}(f, g)
$$
the equivalence classes of all extensions
of $(f, g)$ to a framed $(\varphi, \Gamma)$-module
with $P$-structure.

Let $u_f, u_g\in U(\bA_{K, A})$.
Set
$[\phi]=u_f f$ and $[\gamma]=u_g g$.
Note that $([\phi], [\gamma])$
is a $(\varphi, \Gamma)$-module
if and only if 
$$
u_f \Int_g(\gamma(u_f^{-1})) = 
u_g \Int_f(\varphi(u_g^{-1}))
$$
by Lemma \ref{lem:ext-1}.

Here $\Int_{?}(*) = ? * ?^{-1}$.

\subsection{Assumption}
\label{ass:ext}
We assume $U$ is a unipotent algebraic group
of nilpotency class $2$ and $p\ne 2$.
Assume there exists an embedding 
$\iota:U\hookrightarrow \GL_N$ such that
$(\iota(x)-1)^2=0$ for all $x\in U$.

In particular,
there is a well-defined truncated log map
\begin{eqnarray*}
\log:U \to&\Lie U \\
u \mapsto&  (u-1) - \frac{(u-1)^2}{2}
\end{eqnarray*}
whose inverse is the truncated exponential map
$$
\exp: \Lie U\to U.
$$
Here we embed $U$ into $\Mat_{d\times d}$ in order to define
addition and substraction
(the embedding is not important).
Write $x = \log(u_f)$ and $y = \log (u_g)$.

\subsection{Lemma}
$([\phi], [\gamma])$ is a $(\varphi, \Gamma)$-module
with $P$-structure
extending $([\phi]_{\wh M}, [\gamma]_{\wh M})$
if and only if
$$
(1 - \Int_g\circ \gamma)(x) - \frac{1}{2}[x, \Int_g \gamma(x)] =
(1 - \Int_f \circ\varphi)(y) - \frac{1}{2}[y, \Int_f \varphi(y)].
$$

\begin{proof}
It follows from the Baker-Campbell-Hausdorff formula.
\end{proof}

Recall that a nilpotent Lie algebra of nilpotency class $2$
is isomorphic to its associated graded Lie algebra
(with respect to either the lower or the upper central filtration).
We fix such an isomorphism $\Lie U \cong \gr^\bullet \Lie U = \gr^1\Lie U \oplus \gr^0\Lie U$
where $\gr^0\Lie U$ is the derived subalgebra of $\Lie U$.
Note that $\gr^0\Lie U$ is contained in the center of $\Lie U$.
In particular, if $x\in \Lie U$,
we can write $x = x_0+ x_1$ where $x_i\in \gr^i\Lie U$.

\subsection{Lemma}
\label{lem:Herr-2}
$([\phi], [\gamma])$ is a $(\varphi, \Gamma)$-module
extending $([\phi]_{\wh M}, [\gamma]_{\wh M})$
if and only if
$$
\begin{cases}
(1 - \Int_g\circ \gamma)(x_1) - (1 - \Int_f \circ\varphi)(y_1) = 0 \\
(1 - \Int_g\circ \gamma)(x_0) - (1 - \Int_f \circ\varphi)(y_0) = \frac{1}{2}[x_1, \Int_g \gamma(x_1)]
 - \frac{1}{2}[y_1, \Int_f \varphi(y_1)].
\end{cases}
$$

\begin{proof}
Arrange terms according to their degree
in the graded Lie algebra.
\end{proof}

\subsection{Herr complexes}
\label{par:Herr}
Let $V$ be a vector space over $\spec A$,
and let $(s, t)$ be a framed $(\varphi, \Gamma)$-module
with $\GL(V)$-structure  and $A$-coefficients.
Then the Herr complex associated to $(s, t)$ is by definition
the following
$$
C^\bullet_{\Herr}(s, t):=[V(\bA_{K, A}) \xrightarrow{(s\circ \varphi-1, t\circ \gamma-1)} V(\bA_{K, A}) \oplus V(\bA_{K, A})
\xrightarrow{(t\circ \gamma-1, 1-s\circ \varphi)^t} V(\bA_{K, A})]
$$
Write $Z^\bullet_{\Herr}(s, t)$,
$B^\bullet_{\Herr}(s, t)$
and $H^\bullet_{\Herr}(s, t)$
for the cocycle group, the coboundary group
and the cohomology group of $C^\bullet_{\Herr}(s, t)$.
The reader can easily check that our definiton is consistent with
that of \cite[Section 5.1]{EG23}.
We will denote by $d$ the differential operators in
$C^\bullet_{\Herr}(f, g)$.

Note that $\wh M$ acts on $\gr^1\Lie(U)$
and $\gr^0\Lie(U)$ by conjugation.
Write
\begin{align*}
\Int^0: \wh M \to \GL(\gr^0\Lie(U)),\\
\Int^1: \wh M \to \GL(\gr^1\Lie(U))
\end{align*}
for the conjugation actions.

\subsection{Cup products}
\label{par:cup}
Define a map
\begin{align*}
Q: C^1_{\Herr}(\Int^1(f, g)) & \to C^2_{\Herr}(\Int^0(f, g))\\
(x_1,y_1) &\mapsto \frac{1}{2}[x_1, \Int_g^1 \gamma(x_1)]
 - \frac{1}{2}[y_1, \Int_f^1 \varphi(y_1)]
\end{align*}
and a symmetric bilinear pairing
\begin{align*}
\cup: C^1_{\Herr}(\Int^1(f, g)) \times C^1_{\Herr}(\Int^1(f, g))
&\to C^2_{\Herr}(\Int^0(f, g)) \\
((x_1, y_1), (x_1',y_1')) & \mapsto
\frac{1}{2}(Q(x_1+x_1', y_1+y_1') - Q(x_1,y_1)-Q(x_1',y_1')).
\end{align*}

\subsection{Proposition}
\label{prop:Herr-1}
Define
\begin{align*}
Z_{\Herr}^1(f, g):=
\{
(x_0+x_1,y_0+y_1)\in &C^1_{\Herr}(\Int^0(f, g)\oplus \Int^1(f,g))
|\\
&
\begin{cases}
(x_0, y_0)\in C^1_{\Herr}(\Int^0(f, g))\\
(x_1, y_1)\in C^1_{\Herr}(\Int^1(f, g))\\
d(x_1,y_1) = 0\\
d(x_0, y_0) + (x_1,y_1)\cup (x_1,y_1)=0
\end{cases}
\}.
\end{align*}
There exists a surjective map
\begin{align*}
Z_{\Herr}^1(f,g) &\to H^1_{\Herr}(f, g) \\
(x,y)&\mapsto (\exp(x)f, \exp(y) g)
\end{align*}

\begin{proof}
It is a reformulation of Lemma \ref{lem:Herr-2}.
\end{proof}

\subsection{Lemma}
\label{lem:Herr-3}
The cup product induces a 
well-defined symmetric bilinear pairing
$$
\cup_H: H^1_{\Herr}(\Int^1(f, g))
\times H^1_{\Herr}(\Int^1(f, g))
\to H^2_{\Herr}(\Int^0(f, g)).
$$

\begin{proof}
The proof is formally similar to \cite[Lemma 2.3.9]{L25A}.
\end{proof}

\subsection{Non-split groups}
\label{rem:non-split}
We remark that all results in this section
holds for non-split groups.
More precisely,
Let $F\subset K$ be a $p$-adic field
and fix an action of $\Delta:=\Gal(K/F)$
on the pinned group $(\wh G, \wh B, \wh T, \{Y_\alpha\})$
and assume both $\wh M$ and $P$ are $\Delta$-stable.

Set $\lsup LG := \wh G \rtimes \Delta$,
$\lsup LP:= P\rtimes \Delta$,
and $\lsup LM \rtimes \Delta$.
Denote by $\GL^!(\Lie U)$
the (parabolic) subgroup of the general linear
    group $\GL(\Lie U)$ that preserves the lower central filtration
    of $\Lie U$.
Using the truncated log/exp map,
we have a group scheme homomorphism
$\lsup LM \to \GL^!(\Lie U)$,
which extends to a group scheme homomorphism
$$
\lsup LP = \lsup LM \rtimes U \to \GL^!(\Lie U) \rtimes U.
$$
Name the group $\GL^!(\Lie U) \rtimes U$
as $\wt P$,
and name the homomorphism
$\lsup LP \to \wt P$
as $\Xi$.

\subsection{Definition}
In the non-split setting,
a framed $(\varphi, \Gamma)$-module
with $\lsup LP$-structure
is a $(\varphi, \Gamma)$-module $(F, \phi_F, \gamma_F)$ with $\lsup LP$-structure,
and a framed $(\varphi, \Gamma)$-module
$([\phi], [\gamma])$ with $\wt P$-structure,
together with an identification
$\Xi_*(F, \phi_F, \gamma_F)\cong ([\phi], [\gamma])$.

\vspace{3mm}

The reason we make the definition above is because
$(\varphi, \Gamma)$-modules with $H$-structure
are not represented by a pair of matrices
if $H$ is a {\it disconnected} group.
So by choosing the map $\lsup LP\to \wt P$,
we are able to work with connected groups $\wt P$.

Since the whole purpose of this section is to
understand extensions of $(\varphi, \Gamma)$-modules
and
we fix the $\lsup LM$-semisimplification
of framed $(\varphi, \Gamma)$-module
with $\lsup LP$-structure,
the reader can easily see all results carry over to the non-split case
by using $\wt P$ in place of $P$.

\section{Cohomologically Heisenberg lifting problems}~

We keep notations from the previous section.
Let $\Lambda\subset \bar\bZ_p$ be a DVR.
Let $([\bar \phi], [\bar \gamma])$
be a framed $(\varphi, \Gamma)$-module
with $P$-structure and $\bFp$-coefficients.
Write $(\bar f, \bar g)$ for $([\bar \phi]_{\wh M}, [\bar \gamma]_{\wh M})$.
Fix a framed $(\varphi, \Gamma)$-module
$(f, g)$ with $\wh M$-structure and $\Lambda$-coefficients
lifting $(\bar f, \bar g)$.

By Proposition \ref{prop:Herr-1},
there exists an element
$(\bar x, \bar y)\in Z^1_{\Herr}(\bar f, \bar g)$
representing $([\bar \phi], [\bar \gamma])$.
We can write $\bar x=\bar x_0 + \bar x_1$
and $\bar y =\bar y_0+\bar y_1$
such that 
$(\bar x_i, \bar y_i)\in C^1_{\Herr}(\Int^i(\bar f, \bar g))$.

\subsection{Definition}
\label{def:coh}
A {\it cohomologically Heisenberg lifting problem}
is a tuple
$(f, g, \bar x, \bar y, H)$
consisting of
\begin{itemize}
\item a framed $(\varphi, \Gamma)$-module with $\wh M$-structure and $\Lambda$-coefficients $(f, g)$;
\item an element $(\bar x, \bar y)\in Z^1_{\Herr}(\bar f, \bar g)$, and
\item a $\Lambda$-submodule $H\subset H^1_{\Herr}(\Int^1(f, g))$,
\end{itemize}
such that
\begin{itemize}
\item[(HL1)] $(\bar x_1, \bar y_1)$ lies in the image of
$H$ in $H^1_{\Herr}(\Int^1(\bar f, \bar g))$,
\item[(HL2)] the pairing $\cup_H|_H: H \times H \to H^2_{\Herr}(\Int^0(f, g))$ is surjective,
\item[(HL3)] $H^2_{\Herr}(\Int^0(\bar f, \bar g))\cong \Lambda/\varpi$.
\end{itemize}
A {\it solution} to the lifting problem
$(f, g, \bar x, \bar y, H)$
is an element $(x, y)\in Z^1_{\Herr}(f, g)$
lifting $(\bar x, \bar y)$
such that
the image of $(x_1, y_1)$ in
$H^1_{\Herr}(\Int^1(f, g))$
is contained in $H$.

\vspace{3mm}

At first sight, a cohomologically Heisenberg lifting problem
defines an infinite system of quadratic polynomial equations.
In the following theorem, we show that we may truncate
the infinite system to a finite system and solve cohomologically
Heisenberg lifting problems.

\subsection{Theorem}
\label{thm:coh}
Each cohomologically Heisenberg lifting problem is solvable
after replacing $\Lambda$ by an extension of DVR $\Lambda'\subset \bar\bZ_p$.

\begin{proof}
Before we start, we remark that
$C^\bullet_{\Herr}(\Int^i(f, g))\otimes_{\Lambda}\Lambda/\varpi=C^\bullet_{\Herr}(\Int^i(\bar f, \bar g))$
while it is {\it not} generally true that
$H^\bullet_{\Herr}(\Int^i(f, g))\otimes_{\Lambda}\Lambda/\varpi=H^\bullet_{\Herr}(\Int^i(\bar f, \bar g))$.

Write $Z_H$ for the preimage of $H$ in $Z^1_{\Herr}(\Int^1(f, g))$.
Since $Z^1_{\Herr}(\Int^1(f, g))$ is $\Lambda$-torsion-free,
so is $Z_H$.
Let $X\subset Z_H$ be a finite $\Lambda$-submodule
which maps surjectively onto $H$.
Such an $X$ exists because $H^1_{\Herr}(\Int^1(f, g))$
is a finite $\Lambda$-module (\cite[Theorem 5.1.22]{EG23}).
Since $\Lambda$ is a DVR, $X$ is finite free over $\Lambda$.

Let $W\subset C^2_{\Herr}(\Int^0(f, g))$
be a finite free $\Lambda$-submodule
containing $X\cup X$.

By (HL2), $W\to H^2_{\Herr}(\Int^0(f, g))$
is surjective.
Set $B_W:=B^2_{\Herr}(\Int^0(f, g))\cap W$,
we have $H^2_{\Herr}(\Int^0(f, g)) = W/B_W$.
Again, $B_W$ is a finite free $\Lambda$-module.

Finally, let $Y\subset C^1_{\Herr}(\Int^0(f, g))$
be a finite free $\Lambda$-submodule
which maps surjectively onto $B_W$
and contains at least one lift of $(\bar x_0, \bar y_0)$.

Now consider the system of equations
\[
\fx \cup \fx + d \fy = 0 \in W
\tag{$\dagger$}
\]
where $\fx\in X$ and $\fy\in Y$
are the variables and $W$ is the value space.
We check that ($\dagger$) is a Heisenberg equation in the sense of
Definition \ref{def:Heisenberg-1}.
(H1) follows from (HL3) and the Nakayama lemma,
while (H2) follows from (HL2).
The equation ($\dagger$) admits a mod $\varpi$ solution
$\bar\fx,\bar\fy$
defined by $(\bar x, \bar y)\in Z^1_{\Herr}(\bar f, \bar g)$.
By Theorem \ref{thm:Heisenberg-1},
($\dagger$) admits a solution lifting
$\bar\fx,\bar\fy$
after extending the coefficient ring $\Lambda$.
The solution to the equation ($\dagger$)
is also a solution to the lifting problem.
\end{proof}

\section{Applications to Galois cohomology}~

Let $F/\Qp$ be a $p$-adic field,
and let $G$ is a tamely ramified quasi-split reductive group over $F$
which splits over $K$.
Write $\Delta:=\Gal(K/F)$.
There exists a $\Delta$-stable pinning $(G, B, T, \{X_\alpha\})$
of $G$,
and let $(\wh G, \wh B, \wh T, \{Y_\alpha\})$
be the dual pinned group.
Let $P\subset \wh G$ be a $\Delta$-stable parabolic of $\wh G$
with $\Delta$-stable Levi subgroup $\wh M$
and unipotent radical $U$.
Denote by $\lsup LP$ the semi-direct product $U\rtimes \lsup LM$
where $\lsup LM = \wh M \rtimes \Delta$.

In the terminology of \cite{L23A}, $\lsup LP$
is a big pseudo-parabolic of $\lsup LG = \wh G\rtimes \Delta$
and all big pseudo-parabolic of $\lsup LG$
are of the form $\lsup LP$
(see \cite[Section 3]{L23A}).

We enforce Assumption \ref{ass:ext} throughout this section.
Note that $\lsup LM$ acts on $\Lie U = \gr^0\Lie U \oplus \gr^1\Lie U$
by adjoint, and we denote the adjoint actions by
$\Int^i$ as in Paragraph \ref{par:Herr}.

\subsection{Definition}
\label{def:H-type}
Let $\bar\rho_P: \Gal_F\to \lsup LP(\bFp)$
be a Langlands parameter with Levi factor
$\bar\rho_M:\Gal_F\to \lsup LM(\bFp)$.
We say $\bar\rho_P$ is a {Heisenberg-type} extension of $\bar\rho_M$
if $$\dim_{\bFp}H^2(\Gal_K, \gr^0\Lie U(\bFp)) \le 1.$$
Here the $\Gal_F$-action on $\gr^0\Lie U(\bFp)$
is obtained from composing $\bar\rho_M$
and $\Int^0:\lsup LM \to \GL(\gr^0\Lie U)$.

\subsection{Cup products on Galois cohomology}
Let $A$ be either $\bFp$ or $\bar\bZ_p$.
If $\rho_M:\Gal_F\to \lsup LM(A)$ is an $L$-parameter,
we can equip $\Lie U(A)$ with $\Gal_F$-action
via $\rho_M$.
Note that there exists a symmetric bilinear pairing
$$
H^1(\Gal_F, \gr^1\Lie U(A))
\times
H^1(\Gal_F, \gr^1\Lie U(A))
\to H^2(\Gal_F, \gr^0\Lie U(A)),
$$
which is defined in \cite[Section 3.2]{L25A}.
Alternatively, we can transport the symmetric cup product
on $(\varphi, \Gamma)$-cohomology defined in Definition \ref{par:cup}
and Lemma \ref{lem:Herr-3},
and later generalized in \ref{rem:non-split}
to Galois cohomology.

\subsection{Partial extensions and partial lifts}
A {\it partial extension of $\rho_M$}
is a continuous group homomorphism
$\rho':\Gal_F\to \frac{\lsup LP}{[U,U]}(\bar\bZ_p)$
extending $\rho_M:\Gal_F \to \lsup LM(\bar\bZ_p) = \frac{\lsup LP}{U}(\bar\bZ_p)$.
Here $[U,U]$ is the derived subgroup of $U$.
The set of equivalence classes of partial extensions of
$\rho_M$ are in natural bijection with
$H^1(\Gal_F, \frac{U}{[U,U]}(\bar\bZ_p))=H^1(\Gal_F, \gr^1\Lie U(\bar\bZ_p))$.

Let $\bar\rho_P:\Gal_F\to \lsup LP(\bFp)$ be an $L$-parameter.
A {\it partial lift} of $\bar\rho_P$
is a group homomorphism $\rho':\Gal_F\to \frac{\lsup LP}{[U,U]}(\bar\bZ_p)$
which lifts $\bar\rho_P$ mod $[U,U]$.

\subsection{Lemma}
\label{lem:partial}
A partial extension $\rho':\Gal_F\to \frac{\lsup LP}{[U,U]}(A)$
of $\rho_{M, A}$ extends to a full extension
$\rho:\Gal_F\to\lsup LP(A)$
if and only if $c \cup c = 0$
    where $c\in H^1(\Gal_F, \gr^1\Lie U(A))$
    is the cohomology class corresponding to $\rho'$.

\begin{proof}
It follows immediately from Proposition \ref{prop:Herr-1}.
\end{proof}

\subsection{Theorem}
\label{thm:Galois}
Assume $p\ne 2$.
Let $\bar\rho_P:\Gal_F\to \lsup LP(\bFp)$
be an extension of $\bar\rho_M$.
Assume $\bar\rho_M$ admits a lift $\rho_M:\Gal_F\to \lsup LM(\bar\bZ_p)$
such that
\begin{itemize}
\item[(i)] $\bar\rho_P$
is a Heisenberg-type extension of $\bar\rho_M$,
\item[(ii)] 
$\bar\rho_P|_{\Gal_K}$ admits a partial lift
which is a partial extension of
$\rho_M|_{\Gal_K}$, and
\item[(iii)]
the pairing
$$
H^1(\Gal_F, \gr^1\Lie U(\bar\bZ_p))\underset{\bar\bZ_p}{\otimes}\bFp
\times
H^1(\Gal_F, \gr^1\Lie U(\bar\bZ_p))\underset{\bar\bZ_p}{\otimes}\bFp
\to H^2(\Gal_F, \gr^0\Lie U(\bFp))$$
is non-trivial
unless $H^2(\Gal_F, \gr^0\Lie U(\bFp))=0$,
\end{itemize}
then $\bar\rho_P$
admits a lift $\rho_P:\Gal_F\to \lsup LP(\bar\bZ_p)$
with Levi factor $\rho_M$.

\begin{proof}
Let $A$ be either $\bFp$ or $\bar\bZ_p$,
and let $\rho_{M, A}$ be either $\bar\rho_M$ or $\rho_M$,
resp..
The set of equivalence classes
of $L$-parameters $\Gal_F\to \lsup LP/[U,U](A)$
extending $\rho_{M, A}$
is in natural bijection with
the $A$-module
$H^1(\Gal_F, \gr^1\Lie U(A))$.
Since $K/F$ is assumed to have prime-to-$p$ degree,
we have
$$
H^1(\Gal_F, \gr^1\Lie U(A)) = 
H^1(\Gal_K, \gr^1\Lie U(A))^\Delta
$$
by \cite[Theorem 3.15]{Ko02}.

The $L$-parameter $\bar\rho_P$ defines
an element $\bar c \in H^1(\Gal_K, \gr^1\Lie U(\bFp))^\Delta$.
By item (ii), there exists a lift
$c'\in H^1(\Gal_K, \gr^1\Lie U(\bar\bZ_p))$
lifting $\bar c$.
Define $c:=\frac{1}{[K:F]}\sum_{\gamma\in \Delta}\gamma c\in H^1(\Gal_F, \gr^1\Lie U(\bar\bZ_p))$.
It is clear that $c$
lifts $\bar c$.

There are two possibilities: either
$$\dim_{\bFp}H^2(\Gal_F, \gr^0\Lie U(\bFp)) =0,$$
or
$$\dim_{\bFp}H^2(\Gal_F, \gr^0\Lie U(\bFp)) =1.$$
In the former case, there is no obstruction to extension and lifting
and the Theorem follows from \cite[Proposition 5.3.1]{L25A}.
Now we consider the latter case.

By Fontaine's theory of $(\varphi, \Gamma)$-modules,
$\rho_M$ corresponds to a framed $(\varphi, \Gamma)$-module
$(f, g)$ with $\lsup LM$-structure (or rather $\Gal^!(\Lie U)$-structure by Paragraph \ref{rem:non-split}), and
$\bar\rho_P$ corresponds to an element
$(\bar x, \bar y)\in Z^1_{\Herr}(\bar f, \bar g)$.
Here $(\bar f, \bar g)$ is the reduction of $(f, g)$.
Since Galois cohomology is naturally isomorphic to the cohomology
of Herr complexes (\cite[Theorem 5.1.29]{EG23}),
we can identify $H^1_{\Herr}(\Int^i(f, g))$
with $H^1(\Gal_F, \gr^i\Lie U(\bar\bZ_p))$.

Consider the tuple $(f, g, \bar x, \bar y, H^1(\Gal_F, \gr^1\Lie U(\bar\bZ_p)))$.
We want to check that this tuple is a cohomologically Heisenberg lifting problem in the sense of Definition \ref{def:coh}.
(HL3) follows from assumption (i),
(HL2) follows from assumption (iii) and (HL3),
and (HL1) follows from assumption (ii) and the discussion in the second paragraph of this proof.
We finish the proof by invoking Theorem \ref{thm:coh}.
\end{proof}

\section{Interaction of cup products with  $\bZ/2$-action}~
\label{sec:cup}

Let $K$ be a $p$-adic field.
Let $a=c$ and $b$ be positive integers.
Fix a Galois representation
$$
\bar \tau
=
\begin{bmatrix}
\bar \tau_a & & \\
& \bar \tau_b & \\
& & \bar \tau_c
\end{bmatrix}
:\Gal_K \to
\begin{bmatrix}
\GL_a & & \\
& \GL_b & \\
& & \GL_c
\end{bmatrix}(\bFp),
$$
as well as 
a lift
$$
\tau=
\begin{bmatrix}
\tau_a & & \\
& \tau_b & \\
& & \tau_c
\end{bmatrix}
:\Gal_K \to
\begin{bmatrix}
\GL_a & & \\
& \GL_b & \\
& & \GL_c
\end{bmatrix}(\bar\bZ_p)
$$
of $\bar \tau$.
Write $\gr^0\Lie U := \Mat_{a\times c}$,
and $\gr^1\Lie U :=\Mat_{a\times b} \oplus \Mat_{b\times c}$.
Recall that we have defined a (symmetrized) cup product
$$
H^1(\Gal_K, \gr^1\Lie U(A))
\times
H^1(\Gal_K, \gr^1\Lie U(A))
\to 
H^2(\Gal_K, \gr^0\Lie U(A))
$$
for $A=\bFp, \bar\bZ_p$.
If $c\in H^i(\Gal_K, \gr^1\Lie U(A))$,
write $c = (c_1, c_2)$
where $c_1\in H^i(\Gal_K, \Mat_{a\times b}(A))$,
and $c_2\in H^i(\Gal_K, \Mat_{b\times c}(A))$,

\subsection{Lemma}
\label{lem:unitary-cup-1}
For the cup product
$$H^1(\Gal_K, \gr^1\Lie U(A)) \times H^1(\Gal_K, \gr^1\Lie U(A))
\to H^2(\Gal_K, \gr^0\Lie U(A)),$$
we have
\begin{align*}
(c_1, 0) \cup (c_1, 0) &= 0 \\
(0, c_2) \cup (0, c_2) &= 0\\
(c_1, 0) \cup (0, c_2) &= \frac{1}{2}(c_1, c_2) \cup (c_1, c_2)\\
(c_1, 0) \cup (c_1', 0) &= 0 \\
(0, c_2) \cup (0, c_2') &= 0,
\end{align*}
for any $c_1, c_2, c_1', c_2'$.
\begin{proof}
The first two identities follows from Lemma \ref{lem:partial}.
The last three identity follows from the first two identities.
\end{proof}

Write $\Delta$ for the finite group $\{1, \j\}$ with two elements.
While $\Delta$ denotes the Galois group $\Gal(K/F)$ in the previous sections,
$\Delta$ is merely an abstract group in this section.

\subsection{Definition}
\label{def:classical}
An action of $\Delta$ on each of
$H^i(\Gal_K, \gr^j\Lie U(A))$
is said to be {\it classical}
if
\begin{itemize}
\item
$\j H^i(\Gal_K, \Mat_{a\times b}(A))
\subset H^1(\Gal_K, \Mat_{b\times c}(A))$, $i=0,1,2$;
\item $\j H^i(\Gal_K, \Mat_{b\times c}(A))
\subset H^1(\Gal_K, \Mat_{a\times b}(A))$, $i=0,1,2$;
\item the $\Delta$-action is compatible with the cup products.
\end{itemize}
We also call such $\Delta$-actions
{\it classical $\Delta$-structure}.

\subsection{Proposition}
\label{prop:unitary-cup-1}
Fix a classical $\Delta$-structure.
If
$$H^2(\Gal_K, \gr^0\Lie U(\bFp))=H^2(\Gal_K, \gr^0\Lie U(\bFp))^\Delta=\bFp,$$
then the cup product
$$
H^1(\Gal_K, \gr^1\Lie U(\bar\bZ_p))^\Delta\underset{\bar\bZ_p}{\otimes}\bFp
\times
H^1(\Gal_K, \gr^1\Lie U(\bar\bZ_p))^\Delta\underset{\bar\bZ_p}{\otimes}\bFp
\to H^2(\Gal_F, \gr^0\Lie U(\bFp))^\Delta
$$
is non-trivial if and only if
$$
H^1(\Gal_K, \gr^1\Lie U(\bar\bZ_p))\underset{\bar\bZ_p}{\otimes}\bFp
\times
H^1(\Gal_K, \gr^1\Lie U(\bar\bZ_p))\underset{\bar\bZ_p}{\otimes}\bFp
\to H^2(\Gal_K, \gr^0\Lie U(\bFp))
$$
is non-trivial.

\begin{proof}
Since $\cup$ is a symmetric pairing,
non-triviality of $\cup$
on $H^1(\Gal_K, \gr^1\Lie U(\bar\bZ_p))$
implies $(c_1, c_2)\cup(c_1, c_2)\ne 0$
for some $(c_1, c_2)\in H^1(\Gal_K, \gr^1\Lie U(\bar\bZ_p))\underset{\bar\bZ_p}{\otimes}\bFp$.
By Lemma \ref{lem:unitary-cup-1},
$(c_1, 0)\cup (0, c_2)\ne 0$.
Since $$H^2(\Gal_K, \gr^0\Lie U(\bFp))^\Delta=H^2(\Gal_K, \gr^0\Lie U(\bFp)),$$
we conclude $\Delta$-acts trivially on $H^2(\Gal_K, \gr^0\Lie U(\bFp))$.

We argue by contradition and assume
$\cup$ is trivial on $H^1(\Gal_K, \gr^1\Lie U(\bar\bZ_p))^\Delta\underset{\bar\bZ_p}{\otimes}\bFp$.
We claim $x\cup y = 0$
for each $x\in H^1(\Gal_K, \gr^1\Lie U(\bar\bZ_p))^\Delta\underset{\bar\bZ_p}{\otimes}\bFp$ and
$y\in H^1(\Gal_K, \gr^1\Lie U(\bar\bZ_p))\underset{\bar\bZ_p}{\otimes}\bFp$.
Indeed,
$$2(x\cup y) = x\cup y + \j(x\cup y) = x\cup y + (\j x)\cup (\j y)
= x\cup y + x \cup \j y = x\cup (y+\j y)=0$$
because both $x$ and $y +\j y$ lies in
$H^1(\Gal_K, \gr^1\Lie U(\bar\bZ_p))^\Delta\underset{\bar\bZ_p}{\otimes}\bFp$.

Since $(c_1, 0) + \j (c_1, 0)\in H^1(\Gal_K, \gr^1\Lie U(\bar\bZ_p))^\Delta\underset{\bar\bZ_p}{\otimes}\bFp$,
we have
$((c_1, 0) + \j (c_1, 0)) \cup (0, c_2) = 0$.
However, since $(c_1, 0)\cup (0, c_2)\ne 0$,
we must have $\j (c_1, 0) \cup (0, c_2)\ne 0$.
By the classicality of the $\Delta$-structure,
$\j (c_1, 0) = (0, c_2')$ for some $c_2'$ and thus by Lemma \ref{lem:unitary-cup-1},
$\j(c_1, 0)\cup (0, c_2)=0$ and we get a contradiction.
\end{proof}

Next, we establish a general non-triviality of cup products
result,
and before that we need a non-degeneracy result.

\subsection{Lemma}
\label{lem:unitary-cup-3}
Assume $\bar\tau_a$ and $\bar\tau_c$ are irreducible.
Then the cup product
$$
H^1(\Gal_K, \Mat_{a\times b}(\bFp))
\times
H^1(\Gal_K, \Mat_{b\times c}(\bFp))
\to H^2(\Gal_K, \Mat_{a\times c}(\bFp))
$$
is non-degenerate.

\begin{proof}
Fix a non-zero element $x \in 
H^1(\Gal_K, \Mat_{b\times c}(\bFp))$.
Such an extension class $x$ corresponds to a non-split extension
$\bar\tau_d = 
\begin{bmatrix}
\bar\tau_b & * \\
& \bar\tau_c
\end{bmatrix}$.
In particular, the map
$$
H^0(\Gal_K, \bar\tau_a^\vee \otimes \bar\tau_d(1))
\to
H^0(\Gal_K, \bar\tau_a^\vee \otimes \bar\tau_c(1))
$$
is the zero map (if otherwise the socle of $\bar\tau_d$ is strictly larger than the socle of $\bar\tau_b$ and $\bar\tau_d$ must be a split extension).
By local Tate duality,
the map
$$
H^2(\Gal_K, \bar\tau_c^\vee \otimes \bar\tau_a)
\to
H^2(\Gal_K, \bar\tau_d^\vee \otimes \bar\tau_a)
$$
is also the zero map.
The short exact sequence
$$
0\to \bar\tau_b \to \bar\tau_d \to \bar\tau_c \to 0
$$
induces
the long exact sequence
$$
H^1(\Gal_K, \bar\tau_c^\vee \otimes \bar\tau_a)
\to
H^1(\Gal_K, \bar\tau_d^\vee \otimes \bar\tau_a)
\to
H^1(\Gal_K, \bar\tau_b^\vee \otimes \bar\tau_a)
\to
H^2(\Gal_K, \bar\tau_c^\vee \otimes \bar\tau_a)
\xrightarrow{0}
H^2(\Gal_K, \bar\tau_d^\vee \otimes \bar\tau_a).
$$
Since
$H^2(\Gal_K, \bar\tau_c^\vee \otimes \bar\tau_a) \ne 0$,
there exists an element
$y\in H^1(\Gal_K, \bar\tau_b^\vee \otimes \bar\tau_a)$
which maps to a non-zero element of $H^2(\Gal_K, \bar\tau_c^\vee \otimes \bar\tau_a)$,
and $y$ does not admit an extension
to $H^1(\Gal_K, \bar\tau_d^\vee \otimes \bar\tau_a)$.
By Lemma \ref{lem:partial},
$(x, y)\cup (x, y) \ne 0$,
and thus by Lemma \ref{lem:unitary-cup-1},
$x\cup y = \frac{1}{2}((x, y)\cup (x, y) )\ne 0$.
\end{proof}

\subsection{Lemma}
\label{lem:unitary-cup-4}
Let $X, Y$ be vector spaces over a field $\kappa$.
Let 
$$
\cup: X\times Y \to \kappa
$$
be a non-degenerate
bilinear pairing.
Let $H_X\subset X$ and $H_Y\subset Y$
be subspaces such that
$x\cup y=0$ for all $x\in H_X$ and $y\in H_Y$.
Then either $\dim X \ge 2\dim H_X$
or $\dim Y \ge 2 \dim H_Y$.

\begin{proof}
It suffices to show
$\dim X + \dim Y \ge 2(\dim H_X+\dim H_Y)$.
We define a (symmetric) inner product structure on
$X\oplus Y$ by setting
$(x, y)\cdot (x, y) = x \cup y + y\cup x$.
The lemma now follows from the Gram-Schmidt process.
\end{proof}

\subsection{Lemma}
\label{lem:unitary-cup-5}
Assume both $\bar\tau_a$ and $\bar\tau_c$ are irreducible.
The cup product
$$
H^1(\Gal_K, \Mat_{a\times b}(\bar\bZ_p))\underset{\bar\bZ_p}{\otimes}\bFp
\times
H^1(\Gal_K, \Mat_{b\times c}(\bar\bZ_p))\underset{\bar\bZ_p}{\otimes}\bFp
\to H^2(\Gal_K, \Mat_{a\times c}(\bFp))
$$
is non-trivial unless
all of the following holds
\begin{itemize}
\item 
$a=1$,
\item $K=\Qp$,
\item either $\bar\tau_b = \bar\tau_a(-1)^{\oplus b}$
and $\bar\tau_c = \bar\tau_a(-1)$; or
$\bar\tau_b = \bar\tau_a^{\oplus b}$
and $\bar\tau_c = \bar\tau_a(-1)$.
\end{itemize}

\begin{proof}
By Lemma \ref{lem:unitary-cup-4}
and Lemma \ref{lem:unitary-cup-3},
the lemma holds if
\[
\dim_{\bFp}
H^1(\Gal_K, \Mat_{a\times b}(\bar\bZ_p))\underset{\bar\bZ_p}{\otimes}\bFp
> \frac{1}{2} \dim_{\bFp}
H^1(\Gal_K, \Mat_{a\times b}(\bFp))
\tag{$*$}
\]
and
\[\dim_{\bFp}
H^1(\Gal_K, \Mat_{b\times c}(\bar\bZ_p))\underset{\bar\bZ_p}{\otimes}\bFp
> \frac{1}{2} \dim_{\bFp}
H^1(\Gal_K, \Mat_{b\times c}(\bFp)).
\tag{$**$}
\]
We prove by contradiction
and assume either ($*$) or ($**$) fails.
Since ($*$) and ($**$) are completely similar,
we assume the contraposition of ($*$)
that 
\begin{equation}
\label{eq:cup-1}
H^1(\Gal_K, \Mat_{a\times b}(\bar\bZ_p))\underset{\bar\bZ_p}{\otimes}\bFp
\le \frac{1}{2} \dim_{\bFp}
H^1(\Gal_K, \Mat_{a\times b}(\bFp)).
\end{equation}
By the universal coefficient theorem, we have
the short exact sequence
$$
0 \to H^1(\Gal_K, \Mat_{a\times b}(\bar\bZ_p))\underset{\bar\bZ_p}{\otimes}\bFp
\to H^1(\Gal_K, \Mat_{a\times b}(\bFp))
\to \Tor^{\bar\bZ_p}_1(H^2(\Gal_K, \Mat_{a\times b}(\bar\bZ_p)), \bFp)\to 0.
$$
Therefore the assumption ($\ref{eq:cup-1}$) is equivalent to
\begin{equation}
\label{eq:cup-2}
\dim_{\bFp}
H^1(\Gal_K, \Mat_{a\times b}(\bar\bZ_p))\underset{\bar\bZ_p}{\otimes}\bFp
\le
\dim_{\bFp}
\Tor^{\bar\bZ_p}_1(H^2(\Gal_K, \Mat_{a\times b}(\bar\bZ_p)), \bFp).
\end{equation}
Note that
$$
\dim_{\bFp}
\Tor^{\bar\bZ_p}_1(H^2(\Gal_K, \Mat_{a\times b}(\bar\bZ_p)), \bFp)
\le\dim_{\bFp}H^2(\Gal_K, \Mat_{a\times b}(\bFp)),
$$
since $H^2$ commutes with base change;
also see \cite[Example 3.1.7]{We94} for the computation of $\Tor$.
Also note that
$$
\dim_{\bFp}
H^1(\Gal_K, \Mat_{a\times b}(\bar\bZ_p))\underset{\bar\bZ_p}{\otimes}\bFp
\ge \rank_{\bar\bZ_p}
H^1(\Gal_K, \Mat_{a\times b}(\bar\bZ_p))_{\text{torsion-free}},
$$
and
$$
\rank_{\bar\bZ_p}
H^1(\Gal_K, \Mat_{a\times b}(\bar\bZ_p))_{\text{torsion-free}}
= \dim_{\bQp}H^1(\Gal_K, \Mat_{a\times b}(\bQp)).
$$
By local Euler characteristic, we have
\begin{align*}
\dim_{\bQp}H^1(\Gal_K, \Mat_{a\times b}(\bQp))
=& \dim_{\bQp}H^0(\Gal_K, \Mat_{a\times b}(\bQp))\\
&+ \dim_{\bQp}H^2(\Gal_K, \Mat_{a\times b}(\bQp))
+ [K:\Qp] a b\\
\ge & [K:\Qp] ab.
\end{align*}
On the other hand, by local Tate duality
$$
\dim_{\bFp}H^2(\Gal_K, \Mat_{a\times b}(\bFp))
=\dim_{\bFp}H^0(\Gal_K, \tau_a^\vee \otimes\tau_b(1))
\le a b.
$$
Combine all above, (\ref{eq:cup-2}) becomes
$$
[K:\Qp]a b 
\le a b.
$$
So, all inequalities above must be equalities and we are forced to have
\begin{itemize}
\item[(i)] 
$K=\Qp$,
\item[(ii)]
$H^1(\Gal_K, \Mat_{a\times b}(\bar\bZ_p))$ is torsion-free,
and 
\item[(iii)] $\dim_{\bFp}H^2(\Gal_K, \Mat_{a\times b}(\bFp))=ab$.
\end{itemize}
Item (iii) further forces $a=1$ because $\bar\tau_a$ is assumed to be irreducible.
\end{proof}

\subsection{Corollary}
\label{cor:unitary-cup-1}
Fix a classical $\Delta$-structure.
Assume both $\bar\tau_a$ and $\bar\tau_c$ are irreducible.
The cup product
$$
H^1(\Gal_K, \gr^1\Lie U(\bar\bZ_p))^\Delta\underset{\bar\bZ_p}{\otimes}\bFp
\times
H^1(\Gal_K, \gr^1\Lie U(\bar\bZ_p))^\Delta\underset{\bar\bZ_p}{\otimes}\bFp
\to H^2(\Gal_K, \Mat_{a\times c}(\bFp))^\Delta
$$
is non-trivial unless
all of the following holds
\begin{itemize}
\item 
$a=1$,
\item $K=\Qp$,
\item either $\bar\tau_b = \bar\tau_a(-1)^{\oplus b}$
and $\bar\tau_c = \bar\tau_a(-1)$; or
$\bar\tau_b = \bar\tau_a^{\oplus b}$
and $\bar\tau_c = \bar\tau_a(-1)$.
\end{itemize}

\begin{proof}
Combine Lemma \ref{lem:unitary-cup-5}
and Proposition \ref{prop:unitary-cup-1}.
\end{proof}

\newpage
\phantomsection
\addcontentsline{toc}{part}{Part II: Classical groups}

\noindent
{\bf \large Part II: Classical groups}

We use the convenient convention that
$\GSp_0 = \GO_0=\GL_1$, and $\Sp_0=\SO_0=U_0=1$.

Denote by $w_n=\begin{bmatrix}& & 1 \\& \cdots & \\1&&\end{bmatrix}$
the antidiagonal matrix whose non-zero entries
are $1$.

\section{A: unitary groups}~
\label{sec:ugrp}

Assume $G=U_n$ is a quasi-split tamely ramified unitary group over $F$
which splits over the quadratic extension $K/F$.
The $L$-group $\lsup LU_n = \GL_n \rtimes \{1, \iota\}$
where $\iota$ acts on $\GL_n$ via $w_n(-)^{-t}w_n$.

The Dynkin diagram of $G$ is a chain of $(n-1)$-vertices
(\dynkin{A}{}),
and $\Delta=\Gal(K/F)$ acts on $\Dyn(G)$ by reflection.
The maximal proper $\Delta$-stable subsets of $\Dyn(G)$
are given by removing either two symmetric vertices, or the middle vertex.
Therefore, the Levi subgroups of maximal proper $F$-parabolics
of $G$ are of the form
$$
M_k:=\Res_{K/F}\GL_k \times U_{n-2k}.
$$
If $\lsup LP$ is a maximal proper parabolic of $\lsup LG$,
then the Levi of $\lsup LP$ is
of the form $\lsup LM_k$;
we will write $\lsup LP_k$ for $\lsup LP$ to emphasize its type.

\subsection{Proposition}
\label{prop:unitary}
Let $\bar\rho: \Gal_F\to \lsup LG(\bFp)$ be an $L$-parameter.
Then either $\bar\rho$ is elliptic,
or $\bar\rho$ factors through
$\lsup LP_k(\bFp)$ for some $k$
such that the composite
$\bar r:\Gal_F \xrightarrow{\bar\rho^\Ss} \lsup LM_k(\bFp)
\to \lsup L\Res_{K/F}\GL_k(\bFp)$
is elliptic.

\begin{proof}
By \cite[Theorem B]{L23A},
$\bar\rho$ is either elliptic, or factors through
some $\lsup LP_k(\bFp)$.
By the non-abelian Shapiro's lemma,
$L$-parameters $\Gal_F\to \lsup L\Res_{K/F}\GL_k(\bFp)$
are in natural bijection to $L$-parameters
$\Gal_K\to \GL_k(\bFp)$;
and this bijection clearly preserves ellipticity.
Suppose $\bar r$ is not elliptic, then
$\bar r$, when regarded as a Galois representation
$\Gal_K\to \GL_k(\bFp)$,
contains a proper irreducible subrepresentation
$\bar r_0:\Gal_K\to \GL_s(\bFp)$.
It is easy to see that $\bar\rho$
also factors through $\lsup LP_s(\bFp)$. So we are done.
\end{proof}

We take a closer look at $\lsup LP_k$:
$$
\lsup LP_k=
\begin{bmatrix}
\GL_k & \Mat_{k\times (n-2k)} & \Mat_{k \times k} \\
& \GL_{n-2k} & \Mat_{(n-2k)\times k}\\
& & \GL_{k}
\end{bmatrix}
\rtimes \Delta
$$

\subsection{Lemma}
\label{lem:unitary-1}
If $\bar\rho:\Gal_F \to \lsup LG(\bFp)$
is not elliptic, then there exists a parabolic
$\lsup LP_k$ through which $\bar\rho$ factors
and $\bar\rho$ is a Heisenberg-type extension of
some $\bar\rho_M:\Gal_F \to \lsup LM_k(\bFp)$.

\begin{proof}
By Proposition \ref{prop:unitary},
there exists a parabolic $\lsup LP_k$ such that
$\bar r:\Gal_F \xrightarrow{\bar\rho^\Ss} \lsup LM_k(\bFp)
\to \lsup L\Res_{K/F}\GL_k(\bFp)$
is elliptic.
Write
$$
\bar r|_{\Gal_K} = 
\begin{bmatrix}
\bar r_1 & & \\
& 1_{n-2k} & \\
& & \bar r_2
\end{bmatrix},
$$
where $\bar r_1, \bar r_2:\Gal_K\to \GL_k(\bFp)$.
By the non-abelian Shapiro's lemma (see \cite[Subsection 9.4]{GHS} for details),
$\bar r$ can be fully reconstructed from $\bar r_1$,
and $\bar r_2$ is completely determined by $\bar r_1$;
in particular, both $\bar r_1$ and $\bar r_2$
are irreducible Galois representations.
We
have $$
H^2(\Gal_K, \gr^0\Lie U(\bFp))
=H^2(\Gal_K, \Hom(\bar r_2, \bar r_1)).
$$
Since both $\bar r_1$ and $\bar r_2$
are irreducible, by local Tate duality,
we have $\dim H^2(\Gal_K, \Hom(\bar r_2, \bar r_1))
= \dim H^0(\Gal_K, \Hom(\bar r_1, \bar r_2(1)))\le 1$.
\end{proof}

Next, we study cup products.
Fix the parabolic type $\lsup LP_k$.
We have
$$\gr^1\Lie U = \Mat_{k\times(n-2k)} \oplus \Mat_{(n-2k)\times k}$$
and
$$\gr^0\Lie U = \Mat_{k\times k}.$$
We will use all notations introduced in Section \ref{sec:cup}.

By \cite[Theorem 3.15]{Ko02},
we have
$$
H^i(\Gal_F, \gr^j\Lie U(A))
 = 
H^i(\Gal_K, \gr^j\Lie U(A))^{\Gal(K/F)}
=
H^i(\Gal_K, \gr^j\Lie U(A))^{\Delta},
$$
for all $i$ and $j$.

\subsection{Lemma}
\label{lem:unitary-cup-2}
The $\Delta=\Gal(K/F)$-action on $H^i(\Gal_K, \gr^1\Lie U(A))$
satisfies
\begin{align*}
\j(c_1, 0) = (0, *)\\
\j(0, c_2) = (*, 0),
\end{align*}
for any $c_1, c_2$.

\begin{proof}
The proof is completely similar for $i=0,1,2$,
and we only showcase the $i=1$ case.
Write
$$
w=\begin{bmatrix}
0 & 0 & J_1 \\
0 & J_2 & 0 \\
J_3 & 0 & 0
\end{bmatrix}
$$
for (a representative of) the longest Weyl group element.
Let
$$
\rho=
\begin{bmatrix}
A & B & * \\
& D & E \\
& & F
\end{bmatrix}:
\Gal_K\to P(A)
$$
be a group homomorphism.
Note that each of $A, B, D, E, F$
is a matrix-valued function on $\Gal_K$.
Write $A'$ for $\gamma\mapsto A(\j^{-1} \gamma \j)$
and similarly define $B', D', E', F'$.
We have
$$
\j \rho (\j^{-1} - \j) \j^{-1}
= w \rho(\j^{-1} - \j)^{-t} w^{-1}
=
\begin{bmatrix}
J_1 F^{\prime-t} J_1^{-1} & -J_1 F^{\prime-t}E^{\prime t} D^{\prime -t}
J_2^{-1} & * \\
& J_2 B^{\prime -t} J_2^{-1} &
-J_2 D^{\prime -t} B^{\prime t} A^{\prime -t}J_3^{-1}\\
&& J_3 A^{\prime -t} A^{-1} J_3^{-1}
\end{bmatrix}.
$$
In particular, we see that the $\wh j$-involution on
$H^1(\Gal_K, \gr^1\Lie U(A))$
permutes the two direct summands.
\end{proof}

The lemma above immediately implies the following.

\subsection{Corollary}
\label{cor:delta-unitary}
The Galois action $\Gal(K/F)$ on
$H^i(\Gal_K, \gr^j \Lie U(A))$
is a classical $\Delta$-system in the sense of
Definition \ref{def:classical}.

\subsection{Theorem}
\label{thm:main-unitary}
Theorem \ref{thm:classify} holds for $\lsup LG_n=\lsup LU_n$.

\begin{proof}
We apply Theorem \ref{thm:Galois}:
since $K$ is a quadratic extension of $F$, $K\ne \Qp$,
the non-triviality of cup products follows from
Corollary \ref{cor:delta-unitary},
Lemma \ref{lem:unitary-cup-5} and Proposition \ref{prop:unitary-cup-1}.
\end{proof}

\section{B: symplectic groups}~

Since $G=\GSp_{2n}$ is a split group, we have $K=F$.
The Dynkin diagram for $G$
is \dynkin{B}{}.
Thus the maximal proper Levi subgroups of $G$
are of the form
$$
M_k:=\GL_k \times \GSp_{2(n-k)}, {~k\le n}.
$$
Write $P_k$ for the corresponding parabolic subgroup.

Set
$\Omega_k =
\begin{bmatrix}
& I_{n-k} \\
-I_{n-k} &
\end{bmatrix}
$.
We use the following presentation of $\GSp_{2n}$:
$$
\GSp_{2n} = \{X\in \GL_{2n}|
X^t 
\begin{bmatrix}
& & I_k \\
& \Omega_k & \\
-I_k & &
\end{bmatrix}
X = \lambda \begin{bmatrix}
& & I_k \\
& \Omega_k & \\
-I_k & &
\end{bmatrix}
\}
$$

We have 
$$
P_k = \GSp_{2n} \cap
\begin{bmatrix}
\GL_k & \Mat_{k\times (2n-2k)} & \Mat_{k \times k} \\
& \GL_{2n-2k} & \Mat_{(2n-2k)\times k} \\
& & \GL_k
\end{bmatrix}
=:\GSp_{2n} \cap Q_k
$$
where
$Q_k$ is the corresponding parabolic of $\GL_{2n}$.

\subsection{Lemma}
\label{lem:symplectic-1}
If $\bar\rho:\Gal_F \to \GSp_{2n}(\bFp)$
is not elliptic, then there exists a parabolic
$P_k$ through which $\bar\rho$ factors
and $\bar\rho$ is a Heisenberg-type extension of
some $\bar\rho_M:\Gal_F \to M_k(\bFp)$.

\begin{proof}
The proof is similar to that of Lemma \ref{lem:unitary-1}.
A parabolic $\bar\rho$ factors through $P_k$
for some $k$.
Write
$$
\bar\rho = 
\begin{bmatrix}
\bar r_1 & * & * \\& \bar r_2 & * \\& & \bar r_3
\end{bmatrix}
$$
If $\bar r_1$ or $\bar r_3$ is not irreducible,
then $\bar\rho$ also factors through $P_s$
for some $s$ strictly less than $k$.
So we can assume both $\bar r_1$ and $\bar r_3$ are irreducible.
Finally, local Tate duality ensures $\bar\rho$ is a Heisenberg-type extension.
\end{proof}

Write $U$ and $V$ for
the unipotent radical of $Q_k$ and $P_k$,
respectively.
We have
$$
\gr^0 \Lie U = \Mat_{k\times k}
$$
and
$$
\gr^1 \Lie U = \Mat_{k\times 2(n-k)} \times \Mat_{2(n-k)\times k}
$$

Define an $\Delta:=\{1, \j\}$-action on $\Lie U$ by
\begin{eqnarray}
\label{eqn:symp}
\j (x, y) :=& (y^t\Omega_k, \Omega_k x^t),& (x, y)\in \Mat_{k\times 2(n-k)} \times \Mat_{2(n-k)\times k}\\
\j z :=& z^t,& z\in \Mat_{k\times k}.
\end{eqnarray}

\subsection{Lemma}
\label{lem:symp-2}
We have
$\Lie V = (\Lie U)^\Delta$.

\begin{proof}
Clear.
\end{proof}

\subsection{Lemma}
\label{lem:symp-3}
The $\Delta$-action on $\Lie U$
induces a classical $\Delta$-action on
$H^i(\Gal_K, \gr^j\Lie U(A))$
for each $i$, $j$, and $A=\bFp,~\bar\bZ_p$.

\begin{proof}
Clear by Equation (\ref{eqn:symp}).
\end{proof}

\subsection{Corollary}
\label{cor:symp-1}
For Galois representation
$\rho_M
=
\begin{bmatrix}
\tau_a & & \\
& \tau_b & \\
& & \tau_c
\end{bmatrix}
:\Gal_K\to M_k(\bar\bZ_p)$.
The cup product
$$
H^1(\Gal_K, \gr^1\Lie U(\bar\bZ_p))^\Delta\underset{\bar\bZ_p}{\otimes}\bFp
\times
H^1(\Gal_K, \gr^1\Lie U(\bar\bZ_p))^\Delta\underset{\bar\bZ_p}{\otimes}\bFp
\to H^2(\Gal_K, \Mat_{a\times c}(\bFp))^\Delta
$$
is non-trivial.

\begin{proof}
By Corollary \ref{cor:unitary-cup-1}
and Lemma \ref{lem:symp-3},
the cup product is non-trivial unless
$K=\Qp$,
$k = 1$,
and either
$$
\bar\tau_b = \bar\tau_a(-1)^{\oplus 2n-2},~\text{and}~\bar\tau_c = \bar\tau_a(-1)
$$
or
$$
\bar\tau_b = \bar\tau_a^{\oplus 2n-2},~\text{and}~\bar\tau_c = \bar\tau_a(-1).
$$
The symplecticity of $\rho_M$ implies
$$
\bar\tau_a\bar\tau_c = \lambda
$$
and
$$\bar\tau_b^t\Omega_k \bar\tau_b = \lambda \Omega_k$$
where $\lambda$ is the similitude character.
Since $\bar\tau_b=\bar\tau_a(m)I_{2n-2}$ ($m=0,~-1$) is forced to be a scalar matrix,
we have
$$
\bar\tau_a(m)^2=\lambda.
$$
Thus
$$
\bar\tau_a^2(2m) = \bar\tau_a\bar\tau_c = \bar\tau_a(-1)
$$
which implies
$\bFp(1)=\bFp$, which contradicts the fact that $K=\Qp$.
\end{proof}

\subsection{Theorem}
\label{thm:main-symplectic}
Theorem \ref{thm:classify} holds for $\lsup LG_n=\GSp_{2n}$.

\begin{proof}
Combine Theorem \ref{thm:Galois}
and Corollary \ref{cor:symp-1}.
\end{proof}

\section{C: odd and even orthogonal groups}~

Let $G=\GSO_{n}$ be the split orthogonal similitude groups. 
We have $K=F$.
The Dynkin diagram for $G$
is \dynkin{C}{} or \dynkin{D}{}.
Thus the maximal proper Levi subgroups of $G$
are of the form
$$
M_k:=\GL_k \times \GSO_{n-2k},~k\le n.
$$
Write $P_k$ for the corresponding parabolic subgroup.

We use the following presentation of $\GSO_{n}$:
$$
\GSO_{n} = \text{the neutral component of}~ \{X\in \GL_{n}|
X^t 
\begin{bmatrix}
& & I_k \\
& I_{n-2k} & \\
I_k & &
\end{bmatrix}
X = \lambda 
\begin{bmatrix}
& & I_k \\
& I_{n-2k} & \\
I_k & &
\end{bmatrix}
\}
$$

We have 
$$
P_k = \GSO_{n} \cap
\begin{bmatrix}
\GL_k & \Mat_{k\times (n-2k)} & \Mat_{k \times k} \\
& \GL_{n-2k} & \Mat_{(n-2k)\times k} \\
& & \GL_k
\end{bmatrix}
=:\GSO_{n} \cap Q_k
$$
where
$Q_k$ is the corresponding parabolic of $\GL_{n}$.

\subsection{Lemma}
\label{lem:orth-1}
If $\bar\rho:\Gal_F \to \GSO_{n}(\bFp)$
is not elliptic, then there exists a parabolic
$P_k$ through which $\bar\rho$ factors
and $\bar\rho$ is a Heisenberg-type extension of
some $\bar\rho_M:\Gal_F \to M_k(\bFp)$.

\begin{proof}
It is completely similar to Lemma \ref{lem:symplectic-1}.
\end{proof}

Write $U$ and $V$ for
the unipotent radical of $Q_k$ and $P_k$,
respectively.
We have
$$
\gr^0 \Lie U = \Mat_{k\times k}
$$
and
$$
\gr^1 \Lie U = \Mat_{k\times (n-2k)} \times \Mat_{(n-2k)\times k}
$$

Define an $\Delta:=\{1, \j\}$-action on $\Lie U$ by
\begin{eqnarray}
\label{eqn:orth}
\j (x, y) :=& (-y^t, - x^t),& (x, y)\in \Mat_{k\times (n-2k)} \times \Mat_{(n-2k)\times k}\\
\j z :=& -z^t,& z\in \Mat_{k\times k}.
\end{eqnarray}

\subsection{Lemma}
\label{lem:orth-2}
We have
$\Lie V = (\Lie U)^\Delta$.

\begin{proof}
Clear.
\end{proof}

\subsection{Lemma}
\label{lem:orth-3}
The $\Delta$-action on $\Lie U$
induces a classical $\Delta$-action on
$H^i(\Gal_K, \gr^j\Lie U(A))$
for each $i$, $j$, and $A=\bFp,~\bar\bZ_p$.

\begin{proof}
Clear by Equation (\ref{eqn:orth}).
\end{proof}

\subsection{Corollary}
\label{cor:orth-1}
For Galois representation
$\rho_M
=
\begin{bmatrix}
\tau_a & & \\
& \tau_b & \\
& & \tau_c
\end{bmatrix}
:\Gal_K\to M_k(\bar\bZ_p)$.
The cup product
$$
H^1(\Gal_K, \gr^1\Lie U(\bar\bZ_p))^\Delta\underset{\bar\bZ_p}{\otimes}\bFp
\times
H^1(\Gal_K, \gr^1\Lie U(\bar\bZ_p))^\Delta\underset{\bar\bZ_p}{\otimes}\bFp
\to H^2(\Gal_K, \Mat_{a\times c}(\bFp))^\Delta
$$
is non-trivial.

\begin{proof}
By Corollary \ref{cor:unitary-cup-1}
and Lemma \ref{lem:orth-3},
the cup product is non-trivial unless
$K=\Qp$,
$k = 1$,
and either
$$
\bar\tau_b = \bar\tau_a(-1)^{\oplus 2n-2},~\text{and}~\bar\tau_c = \bar\tau_a(-1)
$$
or
$$
\bar\tau_b = \bar\tau_a^{\oplus 2n-2},~\text{and}~\bar\tau_c = \bar\tau_a(-1).
$$
The orthogonality of $\rho_M$ implies
$$
\bar\tau_a\bar\tau_c = \lambda
$$
and
$$\bar\tau_b^t \bar\tau_b = \lambda I_{n-2}$$
where $\lambda$ is the similitude character.
Since $\bar\tau_b=\bar\tau_a(m)I_{n-2}$ ($m=0,~-1$) is forced to be a scalar matrix,
we have
$$
\bar\tau_a(m)^2=\lambda.
$$
Thus
$$
\bar\tau_a^2(2m) = \bar\tau_a\bar\tau_c = \bar\tau_a(-1)
$$
which implies
$\bFp(1)=\bFp$, which contradicts the fact that $K=\Qp$.
\end{proof}

\subsection{Theorem}
\label{thm:main-orthogonal}
Theorem \ref{thm:classify} holds for $\lsup LG_n=\GO_{n}$.

\begin{proof}
Combine Theorem \ref{thm:Galois}
and Corollary \ref{cor:orth-1}.
\end{proof}

\section{Proof of Theorem \ref{thm:main}
and Theorem \ref{thm:EG}}

\begin{proof}[Proof of Theorem \ref{thm:main}]
By Theorem \ref{thm:EG} and Theorem \ref{thm:classify},
it suffices to show Setup \ref{setup}
can always be achieved
by induction on the rank of the group.

If $\bar\rho$ is elliptic, then it is \cite[Theorem C]{L23A}.
If $\bar\rho$ is not elliptic,
then by Theorem \ref{thm:classify} (1),
we can assume
$\bar\rho$
is a Heisenberg-type extension
of $\rho_M$
where $M$ is a maximal proper Levi of $G$.
By the exhaustive description of $M$,
$M$ is a product $\Res_{K/F}\GL_k \times G'$
where $G'$ is a classical group
of smaller rank and the same type
as $G$ itself.
So $\rho_M$ admits a de Rham lift
of regular Hodge type
$\rho_M = (\rho_{\GL_k}, \rho_{G'})$
where $\rho_{\GL_k}:\Gal_K\to \GL_k(\bZp)$
and $\rho_{G'}:\Gal_F\to\lsup LG'(\bZp)$.
If we replace $\rho_{\GL_k}$
by $\chi_{\cyc}^{p^s-1}\rho_{\GL_k}$
where $s$ is a sufficiently large integer,
then $\Ad(\rho_M):\Gal_F \to U^{\ab}(\bQp)$
has Hodge-Tate weights $\ge 2$
and thus $H^1_{\crys}(\Gal_K, U^{\ab}(\bar\bQ_p))
=H^1(\Gal_K, U^{\ad}(\bar\bQ_p))$,
by, for example,
\cite[Lemma 6.3.1]{EG23}.
\end{proof}

\begin{proof}[Proof of Theorem \ref{thm:EG}]
(1)
It follows from \cite[Theorem 5.1.2]{L25A},
whose proof is essential the same as
that of \cite[Theorem 6.1.1, Theorem 6.3.2]{EG23}.

(2)
By \cite[Theorem 4]{L25B},
we have
$$
\dim [\spec R_s/\wh M]
\le \dim \cX_{\lsup LM, \red} - \frac{s^2}{2}
$$
where $\cX_{\lsup LM, \red}$
is the reduced Emerton-Gee stack
and has dimension $[F:\bQ_p]\dim \wh M/B_{\wh M}$
where $B_{\wh M}$ is a Borel of $\wh M$.
By the main theorem of \cite{BG19},
$$
\dim \spec R = (1+\dim \wh M + [F:\bQ_p]\dim \wh M/B_{\wh M}).
$$
and $R$ is $p$-flat, so $\dim R\otimes \bFp = \dim R-1$.
So it remains to show $s \le \lceil s^2/2\rceil$,
which is clear.
\end{proof}

\addcontentsline{toc}{section}{References}
\printbibliography
\end{document}